%
%
%
%
%

\documentclass[a4paper,12pt]{amsart}
\usepackage{amsmath,amsthm,amssymb}
\usepackage{hyperref}

\allowdisplaybreaks[1]

\textwidth=460pt \evensidemargin=3pt \oddsidemargin=3pt
\marginparsep=8pt \marginparpush=8pt

\DeclareMathOperator{\RE}{Re}
\DeclareMathOperator{\IM}{Im}

\newcommand{\ti}{\widetilde}

\newcommand{\ity}{\infty}
\newcommand{\C}{\mathbb{C}}

\newcommand{\N}{\mathbb{N}}

\newcommand{\B}{\Big}

\numberwithin{equation}{section}
\newtheorem{theorem}{Theorem}[section]

\theoremstyle{remark}
\newtheorem{remark}[theorem]{Remark}

\newtheorem{definition}[theorem]{Definition}

\thanks {The research work of the first  author is supported by research fellowship from Council of Scientific and Industrial Research (CSIR), New Delhi and of the second author by research fellowship from UGC, New Delhi.}

\begin{document}

\title[Dynamics of composite entire functions]{Dynamics of composite entire functions}

\author[D. Kumar]{Dinesh Kumar}
\address{Department of Mathematics, University of Delhi,
Delhi--110 007, India}

\email{dinukumar680@gmail.com }

\author[G.Datt]{Gopal Datt}
\address{Department of Mathematics, University of Delhi,
Delhi--110 007, India}

\email{datt.gopal@ymail.com}

\author[S. Kumar]{Sanjay Kumar}

\address{Department of Mathematics, Deen Dayal Upadhyaya College, University of Delhi,
New Delhi--110 015, India }

\email{sanjpant@gmail.com}

\begin{abstract}
It is known that the dynamics of $f$ and $g$ vary to a large extent from that of its composite entire functions. Using Approximation theory of entire functions, we have shown the existence of entire functions $f$ and $g$ having infinite number of domains satisfying various properties and relating it to their composition. We have explored and enlarged all the maximum possible ways of the solution in comparison to the  past result worked out.
\end{abstract}

\keywords{ Fatou set, wandering domain, Carleman set, preperiodic domain}

\subjclass[2010]{37F10, 30D05}

\maketitle

\section{Introduction}\label{sec1}

Let $f$ be a transcendental entire function. For $n\in\N,$ let $f^n$ denote the $n$-th iterate of $f.$ The set $F(f)=\{z\in\C : \{f^n\}_{n\in\N}\,\text{ is normal in some neighborhood of}\, z\}$ is called the Fatou set of $f$ or the set of normality of $f$ and its complement $J(f)$ is the Julia set of $f$. The Fatou set is open and completely invariant: $z\in F(f)$ if and only if $f(z)\in F(f)$ and consequently $J(f)$ is completely invariant. The Julia set of a transcendental entire function is non empty, closed perfect set and unbounded. All these results and more can be found in Bergweiler \cite{berg1}. If $U$ is a component of Fatou set $F(f),$ then $f(U)$ lies in some component $V$ of $F(f)$ and  $V\setminus f(U)$ is a set which contains atmost one point  \cite{berg3}. This result was also independently proved in \cite{MH}. A component $U$ of  Fatou set of $f$ is called a wandering domain if $U_k\cap U_l=\emptyset$ for $k\neq l,$ where $U_k$ denotes the component of $F(f)$ containing $f^k(U),$ otherwise $U$ is called a preperiodic component of $F(f),$ $f^k(U_l)=U_l$ for some $k,l\geq 0.$ If $f^k(U)=U,$ for some $k\in\N,$ then $U$ is called a periodic component of $F(f).$ Sullivan \cite{Sullivan} proved that the Fatou set of any rational function has no wandering domains. It was Baker \cite{baker2} who gave the first example of an entire function having wandering domains. Thereafter  several other examples of wandering domains  have been given by various authors \cite{APS3}. Certain classes of transcendental functions which do not have wandering domains are also known  \cite{baker3, baker7, berg1, berg2, el2, keen, stallard}.

Two functions $f$ and $g$ are called permutable if $f\circ g=g\circ f.$
Fatou  \cite{beardon}, proved that if $f$ and $g$ are two rational functions which are permutable, then $F(f)=F(g)$.
This was the most famous result  that motivated the dynamics of composition of complex functions. Similar results for transcendental entire functions are still not known, though it holds in some very special cases \cite{baker3}.
If $f$ and $g$ are transcendental entire functions, then so is $f\circ g$ and $g\circ f,$ and the dynamics of one composite entire function helps in the study of the dynamics of the other and vice-versa. A complex number $w\in\C$ is a critical value of a transcendental entire function $f$ if there exist some $w_0\in\C$ with $f(w_0)=w$ and $f'(w_0)=0.$ Here $w_0$ is called a critical point of $f.$ The image of a critical point of $f$ is  critical value of $f.$ Also  $\zeta\in\C$ is an asymptotic value of a transcendental entire function $f$ if there exist a curve $\Gamma$ tending to infinity such that $f(z)\to \zeta$ as $z\to\ity$ along $\Gamma.$  In \cite{dinesh2}, the authors considered the relationship between Fatou sets and singular values of transcendental entire functions $f, g$ and $f\circ g$. They gave various conditions under which Fatou sets of $f$ and $f\circ g$ coincide and also considered relation between the singular values of $f, g$ and their compositions. 
A natural extension of the dynamics associated to the iteration of a complex function is the dynamics of composite of two or more such functions and this leads to the realm of semigroups of transcendental entire functions. In this direction, the seminal work was done by Hinkkanen and Martin \cite{martin} related to semigroups of rational functions. In their  paper, they extended the classical theory of the dynamics  associated to the iteration of a rational function of one complex variable to the more general setting of an arbitrary semigroups of rational functions. 
Many of the results were extended to semigroups of transcendental entire functions in   \cite{{cheng}, {dinesh1}, {poon1}, {zhigang}}. 
In \cite{dinesh1}, the authors generalised  the dynamics of a transcendental entire function on its Fatou set to the dynamics of semigroups of transcendental entire functions. They also investigated the dynamics of conjugate semigroups, abelian transcendental semigroups and wandering and Baker domains of transcendental semigroups. Recall that if $g$ and $h$ are transcendental entire functions and  $f$ is a continuous map of the complex plane into itself with $f\circ g=h\circ f,$ then $g$ and $h$ are said to be semiconjugated by $f$ and $f$ is called a semiconjugacy \cite{berg5}. In \cite{dinesh3}, the first author  considered the dynamics of semiconjugated entire functions and  provided several conditions under which the semiconjugacy  carries Fatou set of one entire function into  Fatou set of other entire function appearing in the semiconjugation. Furthermore, it was shown that under certain conditions on the growth  of  entire functions appearing in the semiconjugation, the set of  asymptotic values of the derivative of composition of the entire functions is bounded.
 The following theorem is a well known result  \cite{berg5, berg6, Poon}.
\begin{theorem}\label{sec1,thm1}
Let $f$ and $g$ be two non-linear entire functions. Then $f\circ g$ has  wandering domains if and only if $g\circ f$ has  wandering domains.
\end{theorem}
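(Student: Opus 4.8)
The plan is to exploit the near-conjugacy between $f\circ g$ and $g\circ f$ provided by $f$ and $g$ themselves. Observe that $g\circ(f\circ g)=(g\circ f)\circ g$, so $g$ semiconjugates $f\circ g$ to $g\circ f$; symmetrically $f$ semiconjugates $g\circ f$ to $f\circ g$. The key structural fact I would use is the standard transfer of Fatou components under such a semiconjugacy: if $h=f\circ g$ and $k=g\circ f$, then $g(F(h))\subseteq F(k)$ and $f(F(k))\subseteq F(h)$, and moreover $g$ maps a component $U$ of $F(h)$ into a component of $F(k)$, with the iterate relation $k^n\circ g=g\circ h^n$ on $F(h)$ (and likewise $h^n\circ f=f\circ k^n$). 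This is precisely the kind of statement discussed in the semiconjugacy references cited just above; I would either cite it or give the one-line normal-family argument (a family is normal at $z$ iff its precomposition with the nonconstant entire map $g$ is normal near any preimage, using the open mapping property of $g$).

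The main argument is then a clean implication chase. Suppose $f\circ g$ has a wandering domain $U$; I claim $g\circ f$ has one too. Let $U$ be a component of $F(f\circ g)$ and let $V$ be the component of $F(g\circ f)$ containing $g(U)$. First I would check that $V$ is wandering by contradiction: if $V$ were (pre)periodic, say $(g\circ f)^{p}(V_q)=V_q$ for suitable indices, then applying $f$ and using $h^n\circ f=f\circ k^n$ together with the component-tracking of $h=f\circ g$ would force the forward orbit of $U$ under $f\circ g$ to be eventually periodic, contradicting that $U$ is wandering. Conversely the same reasoning with the roles of $f$ and $g$ (equivalently, of $h$ and $k$) interchanged shows that a wandering domain of $g\circ f$ yields one of $f\circ g$. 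The two implications are mirror images, so it suffices to write one carefully.

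The step I expect to be the main obstacle is the bookkeeping needed to genuinely rule out periodicity after applying $g$: because $g$ need not be injective and a Fatou component of $f\circ g$ need not map \emph{onto} a full component of $g\circ f$, one must be careful that $g(U)$ landing in a periodic cycle of $g\circ f$ really does pull back to periodicity of the orbit of $U$, rather than merely to two distinct components of $F(f\circ g)$ mapping into the same component of $F(g\circ f)$. The resolution is to track components both ways: from $U_n$ (the component of $F(f\circ g)$ containing $(f\circ g)^n(U)$) we get $g(U_n)$ inside a component $V_n$ of $F(g\circ f)$, and from $V_n$ we get $f(V_n)$ inside a component of $F(f\circ g)$ which, by the relation $(f\circ g)\circ f = f\circ(g\circ f)$ and a short induction, must be $U_{n+1}$. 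Thus $U_n$'s are distinct $\Rightarrow$ $V_n$'s are distinct, which is exactly what wandering of $V$ requires. Once this two-sided tracking is set up, both directions follow immediately, and by symmetry the ``if and only if'' is complete.
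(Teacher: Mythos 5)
You should first note that the paper does not prove this theorem at all: it is quoted as a known result with references to Bergweiler--Hinkkanen, Bergweiler--Wang and Poon--Yang, so the only comparison possible is with the proofs in those papers, and your outline (semiconjugacy $g\circ(f\circ g)=(g\circ f)\circ g$ plus component tracking) is indeed the standard route there. The part you flag as the ``main obstacle'' is handled correctly: with $U_n$ the component of $F(f\circ g)$ containing $(f\circ g)^n(U)$ and $V_n$ the component of $F(g\circ f)$ containing $g(U_n)$, one has $(g\circ f)^n(V_0)\subseteq V_n$ and $f(V_n)\subseteq U_{n+1}$, so $V_m=V_n$ forces $U_{m+1}=U_{n+1}$; hence distinctness of the $U_n$ gives distinctness of the $V_n$, and $V_0$ wanders. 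That bookkeeping, and the symmetry giving the converse, are fine.

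The genuine gap is in the justification you offer for the transfer $g(F(f\circ g))\subseteq F(g\circ f)$ and $f(F(g\circ f))\subseteq F(f\circ g)$. Your stated lemma (a family is normal at $g(z_0)$ iff its precomposition with the nonconstant entire map $g$ is normal at $z_0$) is true, but applying it to the family $\{(g\circ f)^n\}$ only reduces the claim to normality of $\{(g\circ f)^n\circ g\}=\{g\circ(f\circ g)^n\}$ at $z_0$, i.e.\ to a \emph{post}-composition statement; and post-composition with an entire function does not preserve normality in general when a subsequence of $(f\circ g)^n$ tends to $\infty$ (which happens on Baker domains and on many wandering domains), because $g$ has an essential singularity at $\infty$. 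This is precisely the nontrivial content of the cited semiconjugation results, so ``one line'' is not enough. A short correct route is: first prove $f^{-1}(F(f\circ g))\subseteq F(g\circ f)$ --- if $\{(f\circ g)^n\}$ is normal at $f(w_0)$, precompose with $f$ to get normality of $\{f\circ(g\circ f)^n\}$ near $w_0$; for a subsequence with finite limit the maps $(g\circ f)^{n_j}$ eventually omit the nonempty open set $\{|f|>M\}$ on a disk about $w_0$, so Montel applies, while if the limit is $\infty$ then $(g\circ f)^{n_j}\to\infty$ there as well --- and then deduce $g(F(f\circ g))\subseteq F(g\circ f)$ from forward invariance: $z_0\in F(f\circ g)$ gives $f(g(z_0))=(f\circ g)(z_0)\in F(f\circ g)$, hence $g(z_0)\in f^{-1}(F(f\circ g))\subseteq F(g\circ f)$. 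With this lemma established (or simply cited, as you also propose and as the paper itself does), your component-tracking argument completes the proof.
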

It is also known that the dynamics of $f\circ g$ and  $g\circ f$ are very similar. Singh ~\cite{APS3} constructed several examples where the dynamics of $f$ and $g$ vary largely from the dynamics of the composite entire functions. He proved the following result:
\begin{theorem}\label{sec1,thm2}
 There exist transcendental entire functions $f$ and $g$ such that
\begin{enumerate}
\item [(i)] there is a domain which lies in the wandering component of $f$ and wandering component of $g$ and lies in the periodic component of $g\circ f;$
\item [(ii)] there is a domain which lies in the wandering component of $f$ and wandering component of $g$ and also lies in the wandering component of $f\circ g$ and the wandering component of $g\circ f;$
\item [(iii)] there is a domain which lies in the  periodic component of $f$ and  periodic component of $g$, but lies in the wandering component of $f\circ g$ and the wandering component of $g\circ f;$
\item [(iv)]  there is a domain which lies in the  periodic component of $f$ and  periodic component of $g$ and also in the periodic component of $g\circ f$ but lies in the wandering component of $f\circ g.$
\end{enumerate}
\end{theorem}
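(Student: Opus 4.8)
The proof is constructive: both $f$ and $g$ are produced by prescribing their values on a countable family of pairwise disjoint closed disks and then realising the prescription by genuine transcendental entire functions, using Carleman's approximation theorem on unbounded closed sets (the ``approximation theory of entire functions'' referred to in the abstract). Each of the four items calls for its own pair $(f,g)$, and in every case the domain produced will be one of the disks, say $U=D_0$.

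The combinatorial design is the heart of the argument. Fix pairwise disjoint closed disks $\overline{D}_n$, indexed over a countable set, with centres and radii arranged so that $E=\bigcup_n\overline{D}_n$ is a Carleman set, and prescribe $f$ and $g$ so that each carries every $\overline{D}_n$ into a relatively compact subset of some $D_m$; the behaviour of $D_0$ is then encoded in the induced maps on indices. Item (ii) is immediate: let both $f$ and $g$ shift a single escaping chain $D_0\to D_1\to D_2\to\cdots$ that leaves every bounded set, so that $f\circ g$ and $g\circ f$ both shift $D_0\to D_2\to D_4\to\cdots$ and $D_0$ is wandering for all four maps. For (i), let $f$ shift an escaping chain $D_0\to D_1\to\cdots$ and $g$ shift a disjoint escaping chain $D_0\to E_1\to E_2\to\cdots$, while also sending $\overline{D}_1$ (which is unconstrained for $g$) into $D_0$; then $g\circ f$ carries $\overline{D}_0$ into $D_0$, an attracting fixed disk, yet $D_0$ stays wandering for $f$ and for $g$. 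Items (iii) and (iv) are subtler, because of the obstruction that \emph{if $\overline{D}_0$ were carried into $D_0$ by both $f$ and $g$, it would be carried into $D_0$ by $f\circ g$ as well}; so periodicity of $f$ and of $g$ at $D_0$ cannot be realised by fixed disks if $f\circ g$ is to wander there. The remedy is cycles of length at least two: let $f$ run the $2$-cycle $D_0\to D_1\to D_0$ and $g$ the disjoint $2$-cycle $D_0\to E_1\to D_0$, so that $D_0$ is attracting-periodic for both, and exploit that $f$ on $E_1$ and $g$ on $D_1$ are still free. For (iii), divert these onto fresh escaping chains, $f(\overline{E}_1)\subseteq G_1$ and $g(\overline{D}_1)\subseteq K_1$, continuing $f,g$ along $G_1\to G_2\to\cdots$ and $K_1\to K_2\to\cdots$, so that $f\circ g$ shifts $D_0\to G_1\to G_2\to\cdots$ and $g\circ f$ shifts $D_0\to K_1\to K_2\to\cdots$; for (iv), instead put $g(\overline{D}_1)\subseteq D_0$, so that $g\circ f$ fixes $D_0$, while still diverting $f\circ g$ onto an escaping chain $D_0\to G_1\to G_2\to\cdots$. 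All disks occurring in these patterns are kept pairwise disjoint.

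With the disks and the prescribed holomorphic models fixed, one first applies Carleman's theorem to obtain a transcendental entire $g$ so close to its model on $E$ that $g(\overline{D}_n)$ still lands in the prescribed $D_m$ for every relevant $n$; then, \emph{holding $g$ fixed}, one chooses the tolerance for $f$ small compared with the modulus of continuity of $g$ on the (locally finitely many) disks occurring as images under $f$, and applies the theorem again to get a transcendental entire $f$ for which $f(\overline{D}_n)$, and also $g(f(\overline{D}_n))$ and $f(g(\overline{D}_n))$, lie in the targets dictated by the composed index maps. This ordered choice of errors is exactly what transfers the prescribed itineraries to $g\circ f$ and $f\circ g$. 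It then remains to pass from disks to Fatou components: for any entire $h$ and any orbit of pairwise disjoint disks with $h(\overline{D}_{n_j})$ relatively compact in $D_{n_{j+1}}$, the family $\{h^k\}$ is normal on $D_{n_0}$ --- either the orbit escapes to $\infty$, or it closes up, $h^p(\overline{D}_{n_0})$ is relatively compact in $D_{n_0}$, and the Schwarz lemma produces an attracting periodic point --- so $D_{n_0}\subseteq F(h)$, and it is periodic exactly when the orbit closes up. When it does not, the components of $F(h)$ along the orbit are pairwise distinct, since on them $h^k\to\infty$ while a periodic component with that property (a Baker domain) is unbounded; here a standard device --- adjoining to $E$ extra disks on which $h$ is made very large --- forces the relevant components to be bounded and hence wandering. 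Specialising $h$ to $f$, $g$, $f\circ g$ and $g\circ f$ yields the four assertions.

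I expect the main obstacle to be twofold. Conceptually, it is the combinatorial design in items (iii) and (iv): one must spot the ``trapped by both implies trapped by the composite'' constraint that defeats the naive recipe and circumvent it with cycles of length $\geq 2$ whose composite orbits are rerouted through auxiliary escaping chains. Technically, it is the coordination of the two approximations --- fixing the disks, then $g$, then $f$ against $g$ --- so that all four of $f,g,f\circ g,g\circ f$ inherit exactly the intended behaviour at $D_0$, together with the care needed to certify that the escaping chains produce genuinely wandering, that is pairwise distinct, Fatou components rather than cyclic families of Baker domains. The remaining ingredients --- the existence of a Carleman set of the prescribed shape, the approximation step itself, and the Schwarz-lemma argument for the periodic cycles --- are routine.
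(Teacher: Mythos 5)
Your overall route is the paper's route: build a Carleman set out of countably many pieces, prescribe on each piece a target disk, approximate (here via $f=e^{\gamma}$ with $\gamma$ close to a locally constant branch of the logarithm), and read off the dynamics of $f$, $g$, $f\circ g$, $g\circ f$ from the induced maps on indices; your combinatorial designs for (i)--(iv), in particular the observation that (iii) and (iv) cannot be done with fixed disks and require $2$-cycles for $f$ and $g$ with the composite orbits rerouted onto fresh escaping chains, are exactly the patterns realized in Theorems \ref{sec2,thm1}--\ref{sec2,thm4}. Two remarks of lesser weight: the ordered choice of tolerances (``first $g$, then $f$ against the modulus of continuity of $g$'') is unnecessary, since every prescribed image disk is itself part of the approximation set, so the itineraries of $f\circ g$ and $g\circ f$ follow from set inclusions alone and $f,g$ can be approximated independently, as in the paper; and since the statement literally asks for a single pair $(f,g)$ serving all four items, you should superpose your four patterns on disjoint sub-families of disks inside one Carleman set rather than build one pair per item (routine with your setup, and consistent with how the paper treats the items separately).

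The genuine gap is the step certifying that the escaping chains give \emph{wandering} components. Knowing only that $h(\overline{D}_{n_j})$ is compactly contained in $D_{n_{j+1}}$ with the orbit escaping does not make the components containing the $D_{n_j}$ pairwise distinct: the approximation gives no control off the Carleman set, so a priori they could all lie in one unbounded periodic component (a Baker domain). Your remedy --- adjoining extra disks on which $h$ is made very large so as to force the relevant components to be bounded --- fails: a countable family of pairwise disjoint closed disks never separates the plane, so the component containing $D_{n_0}$ can thread between them and remain unbounded or absorb a later $D_{n_j}$; and you cannot instead enclose each $D_n$ by a closed curve inside the approximation set, since such a curve would disconnect $\ti\C\setminus S$ and violate condition (i) of Theorem \ref{sec1,thm0}. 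This is precisely the role of the unbounded pieces of the paper's set $S$: the vertical lines $L_k,M_k$ and the rays attached to $G_k,B_k$ are included in $S$ and mapped into $G_0$, hence lie in Fatou components where the iterates converge to the attracting fixed point, while on the orbit components the iterates tend to $\infty$; so the component of any of $f$, $g$, $f\circ g$, $g\circ f$ containing a given disk cannot cross these lines, is trapped in a single vertical strip, and the components along an escaping chain are therefore pairwise distinct. Replace your ``extra disks'' device by such unbounded separating barriers and the rest of your argument goes through.
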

In the construction of the proof in \cite{APS3}, the author has exhibited entire functions $f$ and $g$ with one domain $G_1$ satisfying the conditions of Theorem \ref{sec1,thm2}. In this connection, one would also be interested in knowing whether it is possible to have entire functions $f$ and $g$ having more than one domain satisfying the conditions of Theorem \ref{sec1,thm2}. This is indeed possible. We have shown the existence of  entire functions having infinitely many domains satisfying the conditions of Theorem \ref{sec1,thm2}. Moreover, we have constructed several other examples where the dynamical behavior of $f$ and $g$ vary greatly from the dynamical behavior of $f\circ g$ and $g\circ f.$  We have explored and enlarged all the maximum possible ways of the solution in comparison to the  past result worked out.
We will require the following definition of Carleman set and a result from Approximation theory of entire functions to prove the theorems in the next section, see \cite{DG}:
\begin{definition}\label{sec1,defn1}
Let $S$ be a closed subset of $\C$ and 
\[C(S)=\{h:S\to\C\, |\, h \,\text{is continuous on} \,S \,\text{and analytic in the interior}\,S^\circ \,\text{of}\, S\}.
\]
Then $S$ is called a Carleman set (for $\C$) if for any $f\in C(S)$ and any positive continuous function $\epsilon$ on $S$, there exist an entire function $g$ such that $|f(z)-g(z)|<\epsilon(z),$ for all $z\in S.$
\end{definition}
\begin{theorem}\cite{DG}\label{sec1,thm0}
 Let $S$ be a closed proper subset of $\C.$ Then $S$ is a Carleman set in $\C$ if and only if $S$ satisfies:
\begin{enumerate}
\item [(i)] $\ti\C\setminus S$ is connected;
\item [(ii)] $\ti\C\setminus S$ is locally connected at $\infty$;
\item [(iii)] for every compact subset $K$ of $\C$ there exist a neighborhood $V$ of $\infty$ in $\ti\C$ such that no component of $S^\circ$ intersects both $K$ and $V.$
\end{enumerate}
\end{theorem}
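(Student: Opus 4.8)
The plan is to prove the two implications separately. The necessity of (i)--(iii) rests on maximum‑principle obstructions. If $\ti\C\setminus S$ were disconnected, then $\C\setminus S$ would contain a bounded component $\Omega$, with $\partial\Omega\subseteq S$ compact; fixing $a\in\Omega$ and $M=\max_{\partial\Omega}|z-a|$, the function $h(z)=1/(z-a)\in C(S)$ cannot be approximated on $S$ within the constant weight $\epsilon\equiv(2M)^{-1}$, since then $(z-a)g(z)-1$ would be an entire function bounded by $\tfrac12$ on $\partial\Omega$, hence on $\overline{\Omega}$ by the maximum principle, contradicting its value $-1$ at $a$; so (i) holds. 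Condition (ii) follows as well, because a Carleman set is in particular a set of uniform approximation by entire functions (constant weights are admissible), and Arakelian's theorem characterises those as exactly the closed sets satisfying (i) and (ii). Finally, if (iii) failed for some compact $K$, a component $C$ of $S^{\circ}$ would meet both $K$ and every neighbourhood of $\infty$; using this ``channel'' one constructs $h\in C(S)$ for which the estimate $|g-h|<\epsilon$ with $\epsilon$ decaying at $\infty$ would force an entire $g$ to inherit along $C$ a growth/phase behaviour no entire function can have — a two‑constants (Phragm\'en--Lindel\"of) argument in the channel. Hence (iii) is necessary.

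For the converse, assume (i)--(iii) and fix $h\in C(S)$ and a positive continuous weight $\epsilon$. The key is a good nested exhaustion $S=\bigcup_{n\ge1}S_n$ by compact sets chosen so that: (a) each $\ti\C\setminus S_n$ is connected, so that Mergelyan's theorem applies on $S_n$ — this is where (i) enters; (b) the $S_n$ exhaust $S$ near $\infty$ without enclosing regions of $\ti\C\setminus S$ that are never filled, which is where (ii) enters; and (c) by (iii) applied with $K=S_{n-1}$ there is a neighbourhood $V_n$ of $\infty$ met by no component of $S^{\circ}$ that also meets $S_{n-1}$, and $S_{n+1}$ is arranged so that the ``collar'' $S_{n+1}\setminus S_n$ lies in $V_n$. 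Writing $\epsilon_n=\min_{S_n}\epsilon>0$ and fixing a summable sequence $(\delta_n)$ with $\sum_{k\ge n}\delta_k<\epsilon_n$ for every $n$, one builds entire $g_n$ inductively with
\[
|g_n-h|<\epsilon-\sum_{k\ge n}\delta_k\ \text{ on }\ S_n,\qquad |g_{n+1}-g_n|<\delta_n\ \text{ on }\ S_{n-1}.
\]
In the inductive step one defines, on $S_{n+1}$, a function $\psi$ equal to $0$ on $S_{n-1}$ and to $h-g_n$ on the collar; by (c) no component of $S_{n+1}^{\circ}$ joins these two regions, so — with a cut‑off that is constant on each component of $S_{n+1}^{\circ}$ — $\psi$ may be taken continuous on $S_{n+1}$, analytic on $S_{n+1}^{\circ}$, and with $|(g_n+\psi)-h|\le|g_n-h|$ throughout; Mergelyan's theorem then supplies an entire $p$ approximating $\psi$ on $S_{n+1}$ to within any prescribed tolerance, and $g_{n+1}:=g_n+p$ satisfies the two displayed estimates. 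Consequently $g_n\to g$ locally uniformly, $g$ is entire, and for every $n$,
\[
|g-h|\ \le\ |g-g_{n+1}|+|g_{n+1}-h|\ <\ \sum_{k\ge n+1}\delta_k+\Big(\epsilon-\sum_{k\ge n+1}\delta_k\Big)\ =\ \epsilon\ \text{ on }\ S_n,
\]
so $|g-h|<\epsilon$ throughout $S$, as required.

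The main obstacle is entirely topological: assembling the exhaustion $\{S_n\}$ so that (i), (ii) and (iii) each perform exactly their function. Property (i) is what lets the compact pieces $S_n$ be taken with connected complement (so Mergelyan applies) while still exhausting $S$; property (ii) is what keeps the collars from trapping parts of $\ti\C\setminus S$; and — the crux — property (iii) is precisely what forbids a component of $S^{\circ}$ from linking the ``frozen'' region $\{\psi=0\}$ to the ``new'' region $\{\psi=h-g_n\}$, the one thing that could prevent $\psi$ from being chosen analytic on the interior and so break the induction. Once the exhaustion is in place, the approximation step itself is a routine application of Mergelyan's theorem together with the elementary summability bookkeeping above, and, conversely, the necessity direction is comparatively soft.
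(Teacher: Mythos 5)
A preliminary remark on the comparison you were asked for: the paper does not prove this statement at all — it is Nersesjan's characterization of Carleman sets, quoted from Gaier's book \cite{DG} — so your proposal can only be judged on its own merits. On those merits, your overall architecture for sufficiency (exhaust $S$ by compacta, approximate each piece by Mergelyan/Arakelian, use (iii) to insert a cut-off that is locally constant on the interior so that corrections made far out cost at most $\delta_n$ on what is already fixed, and sum the errors) is indeed the standard route, and your $\epsilon_n$/$\delta_n$ bookkeeping is consistent as far as it goes. Your necessity argument for (i) is complete and correct, and outsourcing (ii) to Arakelian's theorem is legitimate.

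There are, however, genuine gaps. (1) Necessity of (iii): the ``two-constants argument in the channel'' is only a gesture, and as stated it does not confront the main difficulty — the long component $C$ of $S^\circ$ lies inside $S$, so the hypothesis $|g-h|<\epsilon$ already controls $g-h$ throughout the channel, not merely on its boundary; no contradiction falls out of Phragm\'en--Lindel\"of without a much more careful choice of $h$ and $\epsilon$, and this is precisely the delicate half of Nersesjan's necessity proof. (2) The exhaustion with $\C\setminus S_n$ connected and with the collar $S_{n+1}\setminus S_n$ contained in $V_n$ is asserted, not constructed; building it is essentially the geometric content of Arakelian's theorem and cannot be waved through with ``(i) and (ii) enter here.'' (3) The cut-off (constant on each component of $S_{n+1}^{\circ}$, equal to $0$ on $S_{n-1}$ and to $1$ on the collar) need not exist as a continuous function on $S_{n+1}$: infinitely many interior components carrying the values $0$ and $1$ can accumulate at a single point of $S\setminus S^\circ$, and connected one-dimensional pieces of $S\setminus S^\circ$ (such as the half-lines $L_k, M_k$ of the sets used in this very paper) can join $S_{n-1}$ to the collar, forcing an interpolation you do not discuss; your pointwise bound $|(g_n+\psi)-h|\le|g_n-h|$ is only justified where the cut-off multiplies $h-g_n$. (4) The conclusion that $g_n\to g$ locally uniformly on $\C$ does not follow from your estimates: you control $g_{n+1}-g_n$ only on $S_{n-1}$, which need not contain any fixed disk, so convergence off $S$ is unproven; the standard remedy is to approximate on $S_{n+1}$ together with a large closed disk, which again requires the connectivity work skipped in (2). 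So the proposal captures the right strategy, but points (1), (3) and (4) are real gaps rather than routine details.
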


 In our investigation, $f$ and $g$ are always  assumed to be  transcendental entire functions.
\section{Theorems and their proofs}
\begin{theorem}\label{sec2,thm1}
There exist transcendental entire functions $f$ and $g$ having infinite number of domains which lies in the wandering component of $f$ and wandering component of $g$ and lies in the periodic component of $g\circ f.$
\end{theorem}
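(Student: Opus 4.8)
The plan is to build $f$ and $g$ by prescribing their behaviour on a carefully chosen closed set $S$ and then invoking Theorem \ref{sec1,thm0} to realize the prescription by genuine entire functions. First I would fix a doubly-indexed family of pairwise disjoint closed disks: for each $n \in \N$ a disk $D_n$ centered on the real (or imaginary) axis, together with the ``orbit copies'' $D_n^{(k)}$ that will carry the iterates. Concretely, I want the $D_n$ to march off to infinity fast enough (say centers at $a_n$ with $a_{n+1} \gg a_n$) so that the union $S = \bigcup_{n,k} D_n^{(k)}$ together with any connecting arcs we need is closed, has connected and locally connected (at $\infty$) complement in $\ti\C$, and satisfies condition (iii) of Theorem \ref{sec1,thm0} — this last point forces us to keep the components of $S^\circ$ uniformly bounded, which is why we use disks rather than strips. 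On $S$ I would define a function $\phi$ that (i) inside each $D_n$ maps $D_n$ into the next disk $D_{n+1}$ of its $f$-orbit, translating by a fixed amount, so that no return ever occurs and the $D_n$ land in a genuine wandering orbit for $f$; then take $g$ analogously on a shifted family so that the same base disk $G_n$ also lies in a wandering orbit for $g$; but (ii) arrange the \emph{composition} data so that $g\circ f$ maps $G_n$ back onto itself after finitely many steps. The standard device (this is the mechanism in Singh's single-domain construction) is to let $f$ translate $G_n$ one unit to the right through a chain of disks and let $g$ collapse that chain back, i.e. $g$ sends $f(G_n), f^2(G_n), \dots$ back to $G_n$, so $g\circ f$ is (eventually) the identity on $G_n$ while $f$ and $g$ individually keep wandering.

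The key steps, in order, are: (1) specify the disks $D_n^{(k)}$, $G_n$ and the linking arcs, and verify the three Carleman conditions for $S$ — the separation condition (iii) is the one requiring care, and is handled by choosing the radii $r_n \to 0$ (or at least bounded) and the gaps between successive ``generations'' growing without bound, so that a fixed compact $K$ meets only finitely many components and a suitable neighborhood of $\infty$ meets none of those; (2) define $\phi \in C(S)$ piecewise on the disks: a translation $z \mapsto z + c_n$ on each disk of the $f$-chains, chosen so the images nest correctly into the next disk; (3) apply Theorem \ref{sec1,thm0} to get an entire $f$ with $|f(z) - \phi(z)| < \epsilon(z)$ on $S$, where $\epsilon$ is chosen small enough (shrinking faster than the gaps between a disk's target and the boundary of the containing disk) that $f(D_n^{(k)}) \subset D_n^{(k+1)}$ genuinely holds, and similarly produce $g$; (4) conclude from the inclusions that each $G_n$ lies in a Fatou component, that these components are wandering for $f$ and for $g$ (distinct iterates land in disjoint disks, and on a suitable subdisk the family $\{f^n\}$ is normal because it omits a huge region — or one invokes that bounded wandering behaviour of this type forces normality), while $(g\circ f)^{k}$ maps $G_n$ into $G_n$, making $G_n$ land in a periodic (indeed invariant) component of $g\circ f$; (5) note the construction runs simultaneously over all $n$, yielding infinitely many such domains.

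I expect the main obstacle to be step (1)/(3) done \emph{uniformly in $n$}: we need a single closed set $S$, built from infinitely many disk-chains, that is still a Carleman set, and a single tolerance function $\epsilon(z)$ that is small enough on every disk of every chain to force all the inclusions $f(D) \subset D'$ at once. This is a bookkeeping problem — the radii, the translation constants $c_n$, the inter-generation gaps, and the decay of $\epsilon$ all have to be interlaced so that (a) $S^\circ$ has components of bounded diameter lying in shells going to $\infty$ (for Carleman condition (iii)), (b) the perturbation from $\phi$ to the entire function never pushes an image disk outside its target, and (c) the wandering of $f$ and $g$ and the periodicity of $g\circ f$ survive the perturbation. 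A secondary point to check carefully is that the Fatou components we produce are genuinely wandering for $f$ and $g$ and not accidentally periodic: this follows because the disks $D_n^{(k)}$ for distinct $k$ are pairwise disjoint and escape to infinity, so no iterate of $G_n$ can return to $G_n$ under $f$ alone, and likewise under $g$. Once the combinatorial data is consistent, the rest is routine application of Theorem \ref{sec1,thm0} together with the completely invariant / component-mapping properties of the Fatou set quoted in Section \ref{sec1}.
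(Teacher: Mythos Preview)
Your overall strategy matches the paper's: prescribe the orbit combinatorics on a Carleman set and invoke Theorem~\ref{sec1,thm0}. The specific mechanism you describe --- an $f$-chain out of $G_n$, with $g$ sending $f(G_n)$ back to $G_n$ while $g$ itself sends $G_n$ into a separate chain --- is exactly what the paper implements with two doubly-indexed families $G_{p,q}$, $B_{p,q}$ and the rules $f(G_{p,q})\subset B_{0,q}$, $f(B_{p,q})\subset B_{p,q+1}$, $g(B_{p,q})\subset G_{0,q}$, $g(G_{p,q})\subset G_{p,q+1}$, so that $g\circ f$ fixes each $G_{0,q}$ while $f$ and $g$ individually drive $G_{0,q}$ through infinite chains. (The paper approximates constants and then sets $f=e^{\gamma}$, $g=e^{\gamma_1}$, rather than approximating translations directly, but this is a cosmetic difference.)

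There is, however, a real gap in your step~(4) and in the ``secondary point'' you flag. You argue that the components are wandering because ``distinct iterates land in disjoint disks \dots\ and escape to infinity, so no iterate of $G_n$ can return to $G_n$ under $f$ alone''. This only shows that the \emph{disks} $D_n^{(k)}$ do not return; it does not show that the \emph{Fatou components} containing them are pairwise distinct. A priori the component $U\supset G_n$ could be a single invariant Baker domain containing every $D_n^{(k)}$, and then $U$ would be periodic, not wandering. The paper closes this gap by building into $S$, in addition to the disks, full vertical lines $L_k=\{\RE z=4k\}$, $M_k=\{\RE z=-4k\}$ and a base disk $G_0$, on all of which both target functions are set to $\log 2$; hence $f$ and $g$ map these pieces into the half-radius subdisk of $G_0$, producing an attracting fixed point there. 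The lines $L_k$, $M_k$ therefore lie in an attracting basin where iterates stay bounded, whereas on the disks $G_{p,q}$, $B_{p,q}$ iterates tend to $\infty$. Since each vertical line disconnects $\C$, the Fatou component through any orbit disk is trapped in the strip between two consecutive lines (and Baker's theorem on simple connectivity of unbounded components is invoked), forcing the components containing successive orbit disks to be genuinely different. Your ``disks only'' set has no such topological separators (the ``connecting arcs'' you mention in passing are never assigned a role), so the wandering conclusion is unjustified as written; you need to add separating lines together with an attracting anchor region, as the paper does, before the argument goes through.
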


\begin{proof}
We follow the construction of Carleman set as in \cite{APS3}.
Let
\[
S=G_0\cup\B{\{{\bigcup}_{k=1}^{\ity}(G_k\cup B_k\cup L_k\cup M_k)\B\}},
\]
where $G_0=\{z:|z-2|\leq 1\},\\
G_k=\{z:|z-(4k+2)|\leq 1\}\cup\{z:\RE z=4k+2\;\text{and}\;\IM z\geq 1\}\cup\{z:\RE z=4k+2\;\text{and}\;\IM z\leq-1\}, \;k=1,2,\ldots,\\
M_k=\{z:\RE z=-4k\},\;k=1,2,\ldots,\\
L_k=\{z:\RE z=4k\},\;k=1,2,\ldots, \;\mbox{and}\\
B_k=\{z:|z+(4k+2)|\leq 1\}\cup\{z:\RE z=-(4k+2)\;\text{and}\;\IM z\geq 1\}\cup\{z:\RE z=-(4k+2)\;\text{and}\;\IM z\leq-1\}, \;k=1,2,\ldots.$
Then using Theorem \ref{sec1,thm0}, we get $S$ is a Carleman set. 
It is known that the set of all natural numbers $\N$ can be expressed in an infinite array of numbers as
\[\B\{\frac{q(q-1)}{2}+1+pq+\frac{p(p+1)}{2}: p=0,1,\ldots,\;q=1,2,\ldots\B\}.\]
In fact, a natural number lying in row $p$ and column $q$ ($p=0,1,\ldots,\;q=1,2,\ldots$) would be $\frac{q(q-1)}{2}+1+pq+\frac{p(p+1)}{2}$. For $n\in\N,$ let $r$ be the least positive integer such that $\frac{r(r+1)}{2}\geq n$ and $s=\frac{r(r+1)}{2}-n.$ Then $n$ lies in row $n_r=r-s-1$ and column $n_c=s+1.$ Thus without any loss of generality we may denote the set $G_n$ by its place position $G_{n_r,n_c}$ say, or more simply by $G_{i,j}$ for suitable $i,j$  and $G_{i,j}$ may be denoted by $G_n$ for suitable $n,$ and similarly for other terms.
 We can write $G_k=G_{p,q}$ for suitable $p,q.$ Using the continuity of $e^z,$ for each $k=1,2,\ldots,$ choose $\eta_{p,q}$ and $\xi_{p,q}$  such that
\begin{align*}
|e^w+(4(\frac{q(q+1)}{2}+1+p(q+1)+\frac{p(p+1)}{2})+2)|<\frac{1}{2}, \; \mbox{whenever}\\      
 |w-(\pi i+\log(4(\frac{q(q+1)}{2}+1+p(q+1)+\frac{p(p+1)}{2})+2))|<\eta_{p,q},
\end{align*} and
 
\begin{align*}
|e^w-(4(\frac{q(q+1)}{2}+1+p(q+1)+\frac{p(p+1)}{2})+2)|<\frac{1}{2},\;  \mbox{whenever}\\ 
|w-\log(4(\frac{q(q+1)}{2}+1+p(q+1)+\frac{p(p+1)}{2})+2)|<\xi_{p,q}.
\end{align*}
Also choose $\delta_0,\delta_q,\delta_{q}'$ so that 
\[|e^w-2|<\frac{1}{2},\; \mbox{whenever}\; |w-\log2|<\delta_0,\]
\[|e^w+(4(\frac{q(q-1)}{2}+1)+2)|<\frac{1}{2}, \;\mbox{whenever}\; |w-(\pi i+\log(4(\frac{q(q-1)}{2}+1)+2))|<\delta_q,\] and
\[|e^w-(4(\frac{q(q-1)}{2}+1)+2)|<\frac{1}{2}, \;\mbox{whenever}\; |w- \log(4(\frac{q(q-1)}{2}+1)+2)|<\delta_q'.\]
Define
\begin{equation}\notag
\alpha(z)=
\begin{cases}
\log2, & z\in G_0\cup\B\{{\bigcup}_{k=1}^{\ity}( L_k\cup M_k)\B\}\\
\pi i+\log(4(\frac{q(q-1)}{2}+1)+2), & z\in G_{p,q},\; p\geq 0, q\geq 1\\
\pi i+\log(4(\frac{q(q+1)}{2}+1+p(q+1)+\frac{p(p+1)}{2})+2), & z\in B_{p,q},\;  p\geq 0, q\geq 1\\

\end{cases}
\end{equation}
\begin{equation}\notag
\beta(z)=
\begin{cases}
\log2, & z\in G_0\cup\B\{{\bigcup}_{k=1}^{\ity}( L_k\cup M_k)\B\}\\
\log(4(\frac{q(q-1)}{2}+1)+2),& z\in B_{p,q},\;p=0,1,\ldots, q=1,2,\ldots\\
\log(4(\frac{q(q+1)}{2}+1+p(q+1)+\frac{p(p+1)}{2})+2), & z\in G_{p,q},\;p=0,1,\ldots, q=1,2,\ldots
\end{cases}
\end{equation}
\begin{equation}\notag
\epsilon(z)=
\begin{cases}
\delta_0, & z\in G_0\cup\B{\{{\bigcup}_{k=1}^{\ity}( L_k\cup M_k)\B\}}\\
\delta_q, & z\in G_{p,q},\;p=0,1,\ldots, q=1,2,\ldots\\
\eta_{p,q}, & z\in B_{p,q},\;p=0,1,\ldots,q=1,2,\ldots
\end{cases}
\end{equation}
and
\begin{equation}\notag
\epsilon_{1}(z)=
\begin{cases}
\delta_0, & z\in G_0\cup\B{\{{\bigcup}_{k=1}^{\ity}( L_k\cup M_k)\B\}}\\
\delta_q', & z\in B_{p,q},\;p=0,1,\ldots, q=1,2,\ldots\\
\xi_{p,q}, & z\in G_{p,q},\;p=0,1,\ldots,q=1,2,\ldots
\end{cases}
\end{equation}
Clearly $\alpha(z)$ is continuous on $S$ and analytic in $S^\circ$. Thus there is an entire function $\gamma(z)$ such that $|\gamma(z)-\alpha(z)|<\epsilon(z)$ for all $z\in S.$ The function $f(z)=e^{\gamma(z)}$ is an entire function which satisfies
\[|f(z)-2|<\frac{1}{2},\quad z\in G_0\cup\B{\{{\bigcup}_{k=1}^{\ity}( L_k\cup M_k)\B\}},\]
\[|f(z)+(4(\frac{q(q-1)}{2}+1)+2)|<\frac{1}{2},\quad z\in G_{p,q},\;p=0,1,\ldots,q=1,2,\ldots,\] and 
\[|f(z)+(4(\frac{q(q+1)}{2}+1+p(q+1)+\frac{p(p+1)}{2})+2)|<\frac{1}{2},\quad z\in B_{p,q},\;p=0,1,\ldots,q=1,2,\ldots.\]
Therefore $f$ maps $G_0\cup\B{\{{\bigcup}_{k=1}^{\ity}( L_k\cup M_k)\B\}}$ into the smaller disk $|z-2|<\frac{1}{2}$ of $G_0,$ and so  $G_0\cup\B{\{{\bigcup}_{k=1}^{\ity}( L_k\cup M_k)\B\}}$ contains a fixed point $\zeta$ such that $f^{n}\B(G_0\cup\B{\{{\bigcup}_{k=1}^{\ity}( L_k\cup M_k)\B\}}\B)\to\zeta$ as $n\to\ity,$ which implies that  $G_0\cup\B{\{{\bigcup}_{k=1}^{\ity}( L_k\cup M_k)\B\}}\subset F(f).$ Also $f$ maps each $G_{p,q},\;p=0,1,\ldots, q=1,2,\ldots$ inside a smaller disk of $B_{0,q}$ and each $B_{p,q},\;p=0,1,\ldots, q=1,2,\ldots$ is mapped inside a smaller disk of $B_{p,q+1}.$ Therefore each $G_{p,q}$ and also each $B_{p,q}$ lies in the Fatou set of $f$. Let $C_{p,q}$ be the components of $G_{p,q}$. Then $C_{p,q}$ are wandering domains for $f.$ 
Now as $\beta(z)$ is also  continuous  on $S$ and analytic in $S^\circ$, there exist an entire function $\gamma_{1}(z)$ such that if $g(z)=e^{\gamma_{1}(z)}$ then
 \[|g(z)-2|<\frac{1}{2},\quad z\in G_0\cup\B{\{{\bigcup}_{k=1}^{\ity}( L_k\cup M_k)\B\}},\]
\[|g(z)-(4(\frac{q(q-1)}{2}+1)+2)|<\frac{1}{2},\quad z\in B_{p,q},\;p=0,1,\ldots, q=1,2,\ldots,\]  and
\[|g(z)-(4(\frac{q(q+1)}{2}+1+p(q+1)+\frac{p(p+1)}{2})+2)|<\frac{1}{2},\quad  z\in G_{p,q},\;p=0,1,\ldots,q=1,2,\ldots.\]
Using similar arguments as before, $G_0\cup\B{\{{\bigcup}_{k=1}^{\ity}( L_k\cup M_k)\B\}}\subset F(g).$ Also $g$ maps  each $B_{p,q},\;p=0,1,\ldots,q=1,2,\ldots$ inside a smaller disk in $G_{0,q}$ and each $G_{p,q},\;p=0,1,\ldots,q=1,2,\ldots$ is mapped inside a smaller disk in $G_{p,q+1},$ so that each $G_{p,q}$ as well as each $B_{p,q}$ lies in $F(g).$
Let $C_{0,q}, C_{0,q}'$ respectively be the components of $F(f)$ and $F(g)$ containing $G_{0,q}.$ Since every unbounded Fatou component of a transcendental entire function is simply connected \cite[Theorem 1]{baker1}, it follows that $G_{0,q}=C_{0,q}\cap C_{0,q}'\cap G_{0,q}$ are simply connected domains  which lies in the wandering component of $f$ and also in the wandering component of $g$. Also for any $z\in G_{0,q},\;|f(z)+(4(\frac{q(q-1)}{2}+1)+2)|<\frac{1}{2},$ so that $f(z)\in B_{0,q}$ and consequently, $|g(f(z))-4(\frac{q(q-1)}{2}+1)+2)|<\frac{1}{2}.$ Thus $g(f(z))$ lies in $G_{0,q}.$ Further since $f$ and $g$ are contractions,  it follows  that $G_{0,q}\subset F(g\circ f)$ and lies in periodic component of $g\circ f.$
\end{proof}
\begin{theorem}\label{sec2,thm2}
There exist transcendental entire functions $f$ and $g$ having infinite number of domains which lies in the wandering component of $f$ and  wandering component of $g$ and also lies in the wandering component of $f\circ g$ and the wandering component of $g\circ f.$
\end{theorem}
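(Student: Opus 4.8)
The plan is to run the construction from the proof of Theorem~\ref{sec2,thm1} almost verbatim: I keep the same Carleman set $S$, the same array enumeration of $\N$, and the same function $f$, altering $g$ in exactly one of its three defining cases so that both compositions are forced to wander. Thus I retain $\alpha$, the approximating entire function $\gamma$, and $f(z)=e^{\gamma(z)}$ exactly as there, so that $f$ contracts $G_0\cup\B\{{\bigcup}_{k=1}^{\ity}(L_k\cup M_k)\B\}$ into $|z-2|<\frac12$, maps each $G_{p,q}$ into a smaller disk of $B_{0,q}$, and maps each $B_{p,q}$ into a smaller disk of $B_{p,q+1}$. In particular $G_0\cup\B\{{\bigcup}_{k=1}^{\ity}(L_k\cup M_k)\B\}\subset F(f)$, every $G_{p,q}$ and every $B_{p,q}$ lies in $F(f)$, and since $f^{n}(G_{0,q})\subset B_{0,q+n-1}$ for $n\ge 1$ --- a sequence of pairwise disjoint disks whose centres escape to $\infty$ --- the component $C_{0,q}$ of $F(f)$ containing $G_{0,q}$ is a wandering domain.

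Next I would redefine $\beta$ on $S$ by keeping $\beta(z)=\log2$ on $G_0\cup\B\{{\bigcup}_{k=1}^{\ity}(L_k\cup M_k)\B\}$ and $\beta(z)=\log(4(\frac{q(q+1)}{2}+1+p(q+1)+\frac{p(p+1)}{2})+2)$ on $G_{p,q}$, exactly as in Theorem~\ref{sec2,thm1}, but replacing its value on $B_{p,q}$ by $\beta(z)=\log(4(\frac{q(q+1)}{2}+1)+2)$ in place of $\log(4(\frac{q(q-1)}{2}+1)+2)$. The new value is the one attached to the array position $(0,q+1)$, so the effect is to push $B_{p,q}$ one column forward. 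This $\beta$ is still continuous on $S$ and analytic on $S^{\circ}$ (it is constant on each disk of $S^{\circ}$), so after re-choosing the approximation radii by continuity of $e^z$ --- the analogues of $\delta_0$, $\delta_q'$ and $\xi_{p,q}$ --- Theorem~\ref{sec1,thm0} furnishes an entire function $\gamma_{1}$ with $|\gamma_{1}-\beta|<\epsilon_{1}$ on $S$; set $g(z)=e^{\gamma_{1}(z)}$. Then $g$ contracts $G_0\cup\B\{{\bigcup}_{k=1}^{\ity}(L_k\cup M_k)\B\}$ into $|z-2|<\frac12$, maps each $G_{p,q}$ into a smaller disk of $G_{p,q+1}$, and maps each $B_{p,q}$ into a smaller disk of $G_{0,q+1}$. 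As with $f$, each $G_{p,q}$ and each $B_{p,q}$ lies in $F(g)$, and since $g^{n}(G_{0,q})\subset G_{0,q+n}$, the component $C_{0,q}'$ of $F(g)$ containing $G_{0,q}$ is a wandering domain.

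Finally I would read off the compositions on $G_{0,q}$. For $z\in G_{0,q}$ one has $f(z)\in B_{0,q}$ and hence $g(f(z))\in G_{0,q+1}$; iterating, $(g\circ f)^{n}(G_{0,q})\subset G_{0,q+n}$, so $G_{0,q}$ lies in a wandering component of $g\circ f$. On the other hand $g(z)\in G_{0,q+1}$ and hence $f(g(z))\in B_{0,q+1}$; iterating, $(f\circ g)^{n}(G_{0,q})\subset B_{0,q+n}$ for $n\ge 1$, so $G_{0,q}$ lies in a wandering component of $f\circ g$. In both cases the orbit is a sequence of pairwise disjoint disks whose centres escape to $\infty$, which rules out $G_{0,q}$ lying in a preperiodic component; and, since the maps are contractions on these nested disks, $G_{0,q}\subset F(f\circ g)\cap F(g\circ f)$. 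As in Theorem~\ref{sec2,thm1}, $G_{0,q}$ is a simply connected domain, and letting $q$ run over $\N$ produces infinitely many domains $G_{0,q}$, each lying simultaneously in a wandering component of $f$, of $g$, of $f\circ g$ and of $g\circ f$. I do not expect a real obstacle here: the only delicate point is the index bookkeeping --- checking that the single altered value of $\beta$ really corresponds to the array position $(0,q+1)$ --- together with re-choosing the Carleman approximation radii so that the images of $G_{p,q}$ and $B_{p,q}$ still land inside the prescribed strictly smaller disks, which is exactly what makes the contraction argument and the escape to $\infty$ of the orbit disks (the mechanism that forces wandering rather than preperiodic behaviour) carry over verbatim from Theorem~\ref{sec2,thm1}.
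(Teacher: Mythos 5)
Your proposal is correct and follows essentially the same method as the paper: same Carleman set $S$, same array bookkeeping, same $f$, and a Carleman-approximation redefinition of $\beta$ (hence of $g$) so that the orbits of $G_{0,q}$ under $f$, $g$, $f\circ g$ and $g\circ f$ all pass through pairwise disjoint sets escaping to infinity. The only difference is the particular re-routing (you send $B_{p,q}$ into $G_{0,q+1}$ and keep $g(G_{0,q})\subset G_{0,q+1}$, whereas the paper sends $G_{0,q}$ into $B_{0,q}$ and $B_{p,q}$ into $B_{p,q+1}$), and both choices yield the stated conclusion at the same level of rigor as the paper.
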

\begin{proof}
Let $S,\alpha,f,\epsilon(z),\delta_0,\delta_q,\eta_{p,q}$ be as defined in the Theorem \ref{sec2,thm1}. For each $k=1,2,\ldots,$ choose $\xi_{p,q}$ so that
\begin{align*}
|e^w-(4(\frac{q(q+1)}{2}+1+p(q+1)+\frac{p(p+1)}{2})+2)|<\frac{1}{2}, \;\mbox{whenever}\\  
|w-\log(4(\frac{q(q+1)}{2}+1+p(q+1)+\frac{p(p+1)}{2})+2)|<\xi_{p,q}.
\end{align*}
Define
\begin{equation}\notag
\beta(z)=
\begin{cases}
\log2, & z\in G_0\cup\B{\{{\bigcup}_{k=1}^{\ity}( L_k\cup M_k)\B\}}\\
\pi i+\log(4(\frac{q(q+1)}{2}+1+p(q+1)+\frac{p(p+1)}{2})+2), & z\in B_{p,q}, \;p\geq 0, q\geq 1\\

\pi i+\log(4(\frac{q(q-1)}{2}+1)+2), & z\in G_{0,q},\;q=1,2,\ldots\\
\log(4(\frac{q(q+1)}{2}+1+p(q+1)+\frac{p(p+1)}{2})+2), & z\in G_{p,q},\;p,q=1,2,\ldots\\
\end{cases}
\end{equation}
and
\begin{equation}\notag
\epsilon_{1}(z)=
\begin{cases}
\delta_0, & z\in G_0\cup\B{\{{\bigcup}_{k=1}^{\ity}( L_k\cup M_k)\B\}}\\
\eta_{p,q}, & z\in B_{p,q},\;p=0,1,\ldots,q=1,2,\ldots\\
\delta_q, & z\in G_{0,q},\;q=1,2,\ldots\\
\xi_{p,q}, & z\in G_{p,q},\;p,q=1,2,\ldots\\
\end{cases}
\end{equation}
Clearly $\beta(z)$ is continuous on $S$ and analytic in $S^\circ.$ So there is an entire function $\gamma_1(z)$ such that $|\beta(z)-\gamma_{1}(z)|<\epsilon_1(z)$ for all $z\in S.$ The entire function $g(z)=e^{\gamma_1(z)}$ satisfies
\[|g(z)-2|<\frac{1}{2},\quad z\in G_0\cup\B{\{{\bigcup}_{k=1}^{\ity}( L_k\cup M_k)\B\}},\]
\[|g(z)+(4(\frac{q(q+1)}{2}+1+p(q+1)+\frac{p(p+1)}{2})+2)|<\frac{1}{2},\quad z\in B_{p,q},\;p=0,1,\ldots,q=1,2,\ldots,\]
\[|g(z)+(4(\frac{q(q-1)}{2}+1)+2)|<\frac{1}{2},\quad z\in G_{0,q},\;q=1,2,\ldots,\] and
\[|g(z)-(4(\frac{q(q+1)}{2}+1+p(q+1)+\frac{p(p+1)}{2})+2)|<\frac{1}{2},\quad z\in G_{p,q},\;p,q=1,2,\ldots.\]
This implies that $g$ maps $G_{0,q},\;q=1,2,\ldots$ into a smaller disk in $B_{0,q},$ each $G_{p,q},\;p,q=1,2,\ldots$ into a smaller disk in $G_{p,q+1},$ and each $B_{p,q},\;p=0,1,\ldots, q=1,2,\ldots$ into a smaller disk in $B_{p,q+1}.$ As in Theorem \ref{sec2,thm1}, it can be easily seen that $G_{0,q}$ lies in the wandering component of $f$ and wandering component of $g$ and also lies in the wandering component of $f\circ g$ and wandering component of $g\circ f.$
\end{proof}
\begin{theorem}\label{sec2,thm3}
There exist transcendental entire functions $f$ and $g$ having infinite number of domains which lies in the periodic component of $f$ and periodic component of $g$ but lies  in the wandering component of $g\circ f$ and the wandering component of $f\circ g.$
\end{theorem}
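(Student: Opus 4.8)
The plan is to reuse the Carleman set $S$ and the array indexing $(p,q)\mapsto G_{p,q}$ (and likewise $(p,q)\mapsto B_{p,q}$) from the proof of Theorem \ref{sec2,thm1}, and to obtain $f$ and $g$ as $e^{\gamma}$ and $e^{\gamma_1}$ for Carleman approximations $\gamma,\gamma_1$ of suitable target functions $\alpha,\beta$ on $S$; the distinguished domains will be the discs $G_{0,q}$, $q=1,2,\ldots$. For $f$ I would take $\alpha\equiv\log 2$ on $G_0\cup\bigcup_{k\geq1}(L_k\cup M_k)$, $\alpha\equiv\pi i+\log(4n+2)$ on $G_n$, and $\alpha\equiv\log(4n+2)$ on $B_n$; using the continuity of $e^{z}$ and choosing the accuracy function small enough (exactly as in Theorem \ref{sec2,thm1}), the entire function $f=e^{\gamma}$ then maps $G_0$ and every line $L_k,M_k$ into the disc $|z-2|<\tfrac12$, every $G_n$ into the disc $|z+(4n+2)|<\tfrac12\subset B_n^{\circ}$, and every $B_n$ into the disc $|z-(4n+2)|<\tfrac12\subset G_n^{\circ}$. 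Thus $f$ interchanges $G_n$ and $B_n$, so $f^{2}$ maps each $G_{0,q}$ into a compact subdisc of itself and $G_{0,q}$ lies in a periodic (period-$2$) component of $f$; moreover the lines lie in $F(f)$ and, being mapped into the attracting disc about $2$, cut the plane into vertical strips with no two of the discs in the same strip.

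For $g$ I would keep $\beta\equiv\log 2$ on $G_0$, on all the lines, and on $G_{p,q},B_{p,q}$ with $p\geq1$, and set $\beta\equiv\log\bigl(4(\tfrac{q(q-1)}{2}+1)+2\bigr)$ on $G_{0,q}$ and $\beta\equiv\log\bigl(4(\tfrac{q(q+1)}{2}+1)+2\bigr)$ on $B_{0,q}$. Since $\tfrac{q(q+1)}{2}+1$ is exactly the integer occupying position $(0,q+1)$ of the array, $g=e^{\gamma_1}$ maps each $G_{0,q}$ into a compact subdisc of $G_{0,q}^{\circ}$ and each $B_{0,q}$ into a compact subdisc of $G_{0,q+1}^{\circ}$, and maps $G_0$ and the lines into the disc about $2$. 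Hence each $G_{0,q}$ lies in an invariant --- in particular periodic --- component of $g$, and the lines are again separating walls for $F(g)$.

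It then remains to chase the two compositions. From the inclusions above, $g\circ f$ sends $G_{0,q}$ into a subdisc of $G_{0,q+1}$, hence $(g\circ f)^{n}(G_{0,q})\subset G_{0,q+n}$ for all $n$, while $f\circ g$ sends $G_{0,q}$ into a subdisc of $B_{0,q}$ and $B_{0,q}$ into a subdisc of $B_{0,q+1}$, hence $(f\circ g)^{n}(G_{0,q})\subset B_{0,q+n-1}$ for $n\geq1$. In either case the whole forward orbit of the disc $G_{0,q}$ avoids a fixed half-plane ($\RE z\geq 5$ on every $G_n$ encountered, $\RE z\leq-5$ on every $B_n$), so by Montel's theorem $\{(g\circ f)^n\}$ and $\{(f\circ g)^n\}$ are normal on $G_{0,q}$ and $G_{0,q}\subset F(g\circ f)\cap F(f\circ g)$; and since $L_k,M_k$ are still mapped into the disc about $2$ by $g\circ f$ and by $f\circ g$, they remain walls of $F(g\circ f)$ and $F(f\circ g)$, which forces the consecutive discs in each chain into pairwise distinct components. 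Consequently the simply connected domain $C_{0,q}\cap C'_{0,q}\cap G_{0,q}$ --- where $C_{0,q},C'_{0,q}$ are the components of $F(f),F(g)$ containing $G_{0,q}$, the simple connectivity coming from \cite[Theorem 1]{baker1} as in Theorem \ref{sec2,thm1} --- lies in a periodic component of $f$ and of $g$ but in wandering components of $f\circ g$ and of $g\circ f$, and there are infinitely many of these, one for each $q$. The step I expect to be the main obstacle is this last one: verifying that the discs $G_{0,q+n}$ themselves (not merely their forward images) lie in distinct components of $F(g\circ f)$, and likewise that $G_{0,q}$ really sits in a periodic rather than a preperiodic component of $g$; both rest on the standard confinement argument that a vertical line in the Fatou set, being unbounded hence in a simply connected component \cite[Theorem 1]{baker1}, keeps each disc's Fatou component inside its own strip --- ultimately because an invariant Fatou component cannot be attracted to two distinct points.
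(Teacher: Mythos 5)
Your construction is correct, and it lives inside the same framework as the paper's proof: the same Carleman set $S$, the same array indexing of the discs, the same device of approximating a locally constant target on $S$ by an entire $\gamma$ and taking $f=e^{\gamma}$, $g=e^{\gamma_1}$, and the same style of orbit bookkeeping at the end. What differs is the combinatorial scheme. The paper's $f$ swaps only $G_{0,q}\leftrightarrow B_{0,q}$ while shifting the higher rows ($G_{p,q}\to G_{p,q+1}$, $B_{p,q}\to B_{p,q+1}$ for $p\geq 1$), and its $g$ realizes periodicity via the cycle $G_{0,q}\to B_{1,q}\to G_{0,q}$ and sends $B_{0,q}\to B_{2,q}$, so the two compositions are forced to wander by routing orbits into the rows $p\geq 1$. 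You instead let $f$ swap $G_n\leftrightarrow B_n$ for every $n$, let $g$ fix each $G_{0,q}$ and send $B_{0,q}\to G_{0,q+1}$, and collapse everything with $p\geq 1$ into the attracting basin at $2$; the compositions then wander along row $0$ itself ($g\circ f\colon G_{0,q}\to G_{0,q+1}$ and $f\circ g\colon G_{0,q}\to B_{0,q}\to B_{0,q+1}\to\cdots$). This is, if anything, a simpler assignment, and it yields the same conclusion for the infinitely many domains $G_{0,q}$. Two remarks on the step you flagged as the main obstacle: periodicity for $f$ and $g$ needs no wall argument at all, since $f^{2}(G_{0,q})\subset G_{0,q}$ and $g(G_{0,q})\subset G_{0,q}$ already force the containing Fatou components to be periodic of period at most $2$ (respectively invariant); and the pairwise distinctness of the components of $F(g\circ f)$ containing the discs $G_{0,q+n}$ (respectively of $F(f\circ g)$ containing $B_{0,q+n-1}$) is cleaner if phrased via limit functions rather than via invariance: a connected Fatou component meeting two of these discs must meet a separating vertical line $L_j$ (or $M_j$), on which the iterates of the composition converge to the attracting fixed point near $2$, while on the discs the iterates tend to $\infty$; normality on the component rules this out. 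With that wording your argument is complete and supplies exactly the verification the paper leaves to the reader with its closing remark.
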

\begin{proof}
We construct the functions $f$ and $g$ on the Carleman set $S$ of Theorem \ref{sec2,thm1}.
 Using the continuity of $e^z,$ for each $q=1,2,\ldots,$ choose $\delta_0,\delta_q,\delta_{q}'$ so that 
\[|e^w-2|<\frac{1}{2},\;\mbox{whenever}\; |w-\log2|<\delta_0,\]
\[|e^w+(4(\frac{q(q-1)}{2}+1)+2)|<\frac{1}{2}, \;
\mbox{whenever}\; |w-(\pi i+\log(4(\frac{q(q-1)}{2}+1)+2))|<\delta_q,\] and 
\[|e^w-(4(\frac{q(q-1)}{2}+1)+2)|<\frac{1}{2},\;
\mbox{whenever}\; |w- \log(4(\frac{q(q-1)}{2}+1)+2)|<\delta_q'.\]
Also choose $\xi_{p,q}$ and $\eta_{p,q}$  such that
\begin{align*}
|e^w-(4(\frac{q(q+1)}{2}+1+p(q+1)+\frac{p(p+1)}{2})+2)|<\frac{1}{2},\; \mbox{whenever}\\
  |w-\log(4(\frac{q(q+1)}{2}+1+p(q+1)+\frac{p(p+1)}{2})+2)|<\xi_{p,q},
\end{align*} and
 
\begin{align*}
|e^w+(4(\frac{q(q+1)}{2}+1+p(q+1)+\frac{p(p+1)}{2})+2)|<\frac{1}{2},\;\mbox{whenever}\\
  |w-(\pi i+\log(4(\frac{q(q+1)}{2}+1+p(q+1)+\frac{p(p+1)}{2})+2))|<\eta_{p,q}.
\end{align*}
Define
\begin{equation}\notag
\alpha(z)=
\begin{cases}
\log2, & z\in G_0\cup\B{\{{\bigcup}_{k=1}^{\ity}( L_k\cup M_k)\B\}}\\
\pi i+\log(4(\frac{q(q-1)}{2}+1)+2), & z\in G_{0,q},\;q=1,2,\ldots\\
\log(4(\frac{q(q-1)}{2}+1)+2), & z\in B_{0,q},\;q=1,2,\ldots\\
\pi i+\log(4(\frac{q(q+1)}{2}+1+p(q+1)+\frac{p(p+1)}{2})+2), & z\in B_{p,q},\;p,q=1,2,\ldots\\
\log(4(\frac{q(q+1)}{2}+1+p(q+1)+\frac{p(p+1)}{2})+2), & z\in G_{p,q},\;p,q=1,2,\ldots
\end{cases}
\end{equation}
and
\begin{equation}\notag
\epsilon(z)=
\begin{cases}
\delta_0, & z\in G_0\cup\B{\{{\bigcup}_{k=1}^{\ity}( L_k\cup M_k)\B\}}\\
\delta_q, & z\in G_{0,q},\;q=1,2,\ldots\\
\delta_q', & z\in B_{0,q},\;q=1,2,\ldots\\
\eta_{p,q}, & z\in B_{p,q},\;p,q=1,2,\ldots\\
\xi_{p,q}, & z\in G_{p,q},\; p,q=1,2,\ldots
\end{cases}
\end{equation}
As $S$ is a Carleman set, $\alpha(z)$ is continuous on $S$ and analytic in $S^\circ$, there exist an entire function $\gamma(z)$ such that $|\gamma(z)-\alpha(z)|<\epsilon(z)$ for all $z\in S.$ Consequently, the entire function $f(z)=e^{\gamma(z)}$ satisfies
\[|f(z)-2|<\frac{1}{2},\quad z\in G_0\cup\B{\{{\bigcup}_{k=1}^{\ity}( L_k\cup M_k)\B\}},\]
\[|f(z)+(4(\frac{q(q-1)}{2}+1)+2)|<\frac{1}{2},\quad z\in G_{0,q},\;q=1,2,\ldots,\]
\[|f(z)-(4(\frac{q(q-1)}{2}+1)+2)|<\frac{1}{2},\quad z\in B_{0,q},\;q=1,2,\ldots,\]
\[|f(z)+(4(\frac{q(q+1)}{2}+1+p(q+1)+\frac{p(p+1)}{2})+2)|<\frac{1}{2},\quad z\in B_{p,q},\;p,q=1,2,\ldots,\] and
\[|f(z)-(4(\frac{q(q+1)}{2}+1+p(q+1)+\frac{p(p+1)}{2})+2)|<\frac{1}{2},\quad z\in G_{p,q},\;p,q=1,2,\ldots.\]
Therefore the function $f$ satisfies
\[f\B( G_0\cup\B{\{{\bigcup}_{k=1}^{\ity}( L_k\cup M_k)\B\}}\B)\subset \{z:|z-2|<\frac{1}{2}\}\subset G_0,\]
\[f(G_{0,q})\subset\{z:|z+(4(\frac{q(q-1)}{2}+1)+2)|<\frac{1}{2}\}\subset B_{0,q},\;q=1,2,\ldots,\]
\[f(B_{0,q})\subset\{z:|z-(4(\frac{q(q-1)}{2}+1)+2)|<\frac{1}{2}\}\subset G_{0,q},\;q=1,2,\ldots,\]
\[f(B_{p,q})\subset\{z:|z+(4(\frac{q(q+1)}{2}+1+p(q+1)+\frac{p(p+1)}{2})+2)|<\frac{1}{2}\}\subset B_{p,q+1},\; p,q\geq 1,\] and
\[f(G_{p,q})\subset\{z:|z-(4(\frac{q(q+1)}{2}+1+p(q+1)+\frac{p(p+1)}{2})+2)|<\frac{1}{2}\}\subset G_{p,q+1},\; p,q\geq 1.\]
We now choose $\mu_q,\mu_q'$ so that
\[|e^w+(4(\frac{q(q-1)}{2}+q+2)+2)|<\frac{1}{2},\;\mbox{whenever}\; |w-(\pi i+\log(4(\frac{q(q-1)}{2}+q+2)+2))|<\mu_q,\] and 
\[|e^w+(4(\frac{q(q-1)}{2}+2q+4)+2)|<\frac{1}{2},\;\mbox{whenever}\; |w-(\pi i+\log(4(\frac{q(q-1)}{2}+2q+4)+2))|<\mu_q'.\]
Next define
\begin{equation}\notag
\beta(z)=
\begin{cases}
\log2, & z\in G_0\cup\B{\{{\bigcup}_{k=1}^{\ity}( L_k\cup M_k)\B\}}\\
\pi i+\log(4(\frac{q(q-1)}{2}+q+2)+2), & z\in G_{0,q},\;q=1,2,\ldots\\
\pi i+\log(4(\frac{q(q-1)}{2}+2q+4)+2), & z\in B_{0,q},\;q=1,2,\ldots\\
\pi i+\log(4(\frac{q(q+1)}{2}+1+p(q+1)+\frac{p(p+1)}{2})+2), & z\in B_{p,q},\;p\geq 2, q=1,2,\ldots\\
\log(4(\frac{q(q-1)}{2}+1)+2),& z\in B_{1,q},\;q=1,2,\ldots\\
\log(4(\frac{q(q+1)}{2}+1+p(q+1)+\frac{p(p+1)}{2})+2), & z\in G_{p,q},\;p,q=1,2,\ldots
\end{cases}
\end{equation}
and
\begin{equation}\notag
\epsilon{_1}(z)=
\begin{cases}
\delta_0, & z\in G_0\cup\B{\{{\bigcup}_{k=1}^{\ity}( L_k\cup M_k)\B\}}\\
\mu_q, & z\in G_{0,q},\;q=1,2,\ldots\\
\mu_q', & z\in B_{0,q},\;q=1,2,\ldots\\
\eta_{p,q}, & z\in B_{p,q},\;p\geq 2, q=1,2,\ldots\\
\delta_q', & z\in B_{1,q},\;q=1,2,\ldots\\
\xi_{p,q}, & z\in G_{p,q},\; p,q=1,2,\ldots
\end{cases}
\end{equation}
As $\beta(z)$ is continuous on $S$ and analytic in $S^\circ$, there exist an entire function $\gamma_1(z)$ such that $|\gamma_1(z)-\beta(z)|<\epsilon_{1}(z)$ for all $z\in S.$ The entire function $g(z)=e^{\gamma_1(z)}$ satisfies
\[|g(z)-2|<\frac{1}{2},\quad z\in G_0\cup\B{\{{\bigcup}_{k=1}^{\ity}( L_k\cup M_k)\B\}},\]
\[|g(z)+(4(\frac{q(q-1)}{2}+q+2)+2)|<\frac{1}{2},\quad  z\in G_{0,q},\;q=1,2,\ldots,\]
\[|g(z)+(4(\frac{q(q-1)}{2}+2q+4)+2)|<\frac{1}{2}, \quad z\in B_{0,q},\;q=1,2,\ldots,\]
\[|g(z)+(4(\frac{q(q+1)}{2}+1+p(q+1)+\frac{p(p+1)}{2})+2)|<\frac{1}{2},\quad  z\in B_{p,q},\;p\geq 2, q=1,2,\ldots,\]
\[|g(z)-(4(\frac{q(q-1)}{2}+1)+2)|<\frac{1}{2},\quad z\in B_{1,q},\;q=1,2,\ldots,\] and
\[|g(z)-(4(\frac{q(q+1)}{2}+1+p(q+1)+\frac{p(p+1)}{2})+2|<\frac{1}{2},\quad  z\in G_{p,q},\;p,q=1,2,\ldots.\]
Thus the function $g$ satisfies
\[g\B(G_0\cup\B{\{{\bigcup}_{k=1}^{\ity}( L_k\cup M_k)\B\}}\B)\subset\{z:|z-2|<\frac{1}{2}\}\subset G_0,\]
\[g(G_{0,q})\subset\{z:|z+(4(\frac{q(q-1)}{2}+q+2)+2)|<\frac{1}{2}\}\subset B_{1,q},\;q=1,2,\ldots,\]
\[g(B_{0,q})\subset\{z:|z+(4(\frac{q(q-1)}{2}+2q+4)+2)|<\frac{1}{2}\}\subset B_{2,q},\;q=1,2,\ldots,\]
\[g(B_{1,q})\subset\{z:|z-(4(\frac{q(q-1)}{2}+1)+2)|<\frac{1}{2}\}\subset G_{0,q},\;q=1,2,\ldots,\]
\[g(B_{p,q})\subset\{z:|z+(4(\frac{q(q+1)}{2}+1+p(q+1)+\frac{p(p+1)}{2})+2)|<\frac{1}{2}\}\subset B_{p,q+1},\;p\geq 2, q\geq 1,\] and 
\[g(G_{p,q})\subset\{z:|z-(4(\frac{q(q+1)}{2}+1+p(q+1)+\frac{p(p+1)}{2})+2)|<\frac{1}{2}\}\subset G_{p,q+1,}\;p, q=1,2,\ldots.\]
It can now be easily seen that $G_{0,q}$ lies in the periodic component of $f$ and periodic component of $g$ but lies in the wandering component of $g\circ f$ and wandering component of $f\circ g.$
\end{proof}
\begin{theorem}\label{sec2,thm4}
There exist transcendental entire functions $f$ and $g$ having infinite number of domains which lies in the periodic component of $f$ and periodic component of $g$ and also in the periodic component of $g\circ f$ but lies in the wandering component of $f\circ g.$
\end{theorem}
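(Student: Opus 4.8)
The plan is to repeat, on the very Carleman set $S$ of Theorem~\ref{sec2,thm1}, the recipe of the preceding proofs: prescribe logarithms $\alpha$ (for $f$) and $\beta$ (for $g$) together with error functions $\epsilon,\epsilon_{1}$, so that $f=e^{\gamma}$ and $g=e^{\gamma_{1}}$ — where, by Theorem~\ref{sec1,thm0}, $|\gamma-\alpha|<\epsilon$ and $|\gamma_{1}-\beta|<\epsilon_{1}$ on $S$ — realise a convenient mapping scheme on the disks $G_{p,q},B_{p,q}$. As always both $f$ and $g$ will send $G_{0}\cup\bigcup_{k}(L_{k}\cup M_{k})$ into $\{|z-2|<\frac{1}{2}\}$, so that set lies in a common attracting component and may be ignored.

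Writing ``$A\to B$'' for ``$f$ (respectively $g$) maps the disk $A$ into the concentric sub-disk of radius $\frac{1}{2}$ of the disk $B$'', the scheme I would impose inside each column $q$ is: for $f$, the two-cycle $G_{0,q}\to B_{0,q}\to G_{0,q}$, together with $B_{2j-1,q}\to B_{2j,q}$ for all $j\geq1$, and every remaining disk (that is $B_{2j,q}$ for $j\geq1$ and $G_{p,q}$ for $p\geq1$) sent into a smaller disk of itself; for $g$, the two-cycle $G_{0,q}\to B_{1,q}\to G_{0,q}$, together with $B_{0,q}\to G_{0,q}$ and $B_{2j,q}\to B_{2j+1,q}$ for all $j\geq1$, and every remaining disk again sent into a smaller disk of itself. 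Concretely $\alpha$ equals $\log 2$ on $G_{0}$ and the lines and equals $\log c$ on a disk whose target has centre $c$, with $\pi i$ added whenever $c<0$ (i.e.\ whenever the target is a $B$-disk); $\epsilon$ is the matching radius, chosen from the continuity of $e^{z}$ exactly as the constants $\delta_{0},\delta_{q},\delta_{q}',\eta_{p,q},\xi_{p,q}$ (and a few new ones) were chosen in Theorems~\ref{sec2,thm1}--\ref{sec2,thm3}; and $\beta,\epsilon_{1}$ are assembled the same way for the $g$-scheme. All these data are constant on each connected piece of $S$, so $\alpha,\beta\in C(S)$ and Theorem~\ref{sec1,thm0} applies verbatim.

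Granting the scheme, each of the four claims is the same kind of contraction/normality argument used in Theorem~\ref{sec2,thm1}. As $f$ sends $G_{0,q}$ into a smaller disk of $B_{0,q}$ and $B_{0,q}$ into a smaller disk of $G_{0,q}$, the map $f^{2}$ carries $G_{0,q}$ into a compact subset of itself, so $G_{0,q}$ lies in a periodic component of $F(f)$; in the same way $G_{0,q}\to B_{1,q}\to G_{0,q}$ places $G_{0,q}$ in a periodic component of $F(g)$. Next, $g\circ f(G_{0,q})\subset g(B_{0,q})$, which lies in a smaller disk of $G_{0,q}$, so $g\circ f$ sends $G_{0,q}$ into a compact subset of itself and $G_{0,q}$ lies in a fixed — hence periodic — component of $F(g\circ f)$. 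Finally $f\circ g(G_{0,q})$ lies in a smaller disk of $B_{2,q}$, and more generally $f\circ g(B_{2j,q})$ lies in a smaller disk of $B_{2j+2,q}$ for every $j\geq1$, so by induction $(f\circ g)^{j}(G_{0,q})$ lies in a smaller disk of $B_{2j,q}$ for every $j\geq1$; the disks $B_{2,q},B_{4,q},B_{6,q},\dots$ being pairwise disjoint with centres tending to $\infty$, the family $\{(f\circ g)^{n}\}$ is normal on $G_{0,q}$ and the $f\circ g$-orbit of $G_{0,q}$ meets infinitely many distinct Fatou components, so $G_{0,q}$ lies in a wandering component of $F(f\circ g)$. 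Carrying this out for every $q=1,2,\dots$ produces infinitely many domains with the stated property, each simply connected as in Theorem~\ref{sec2,thm1}.

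The one genuinely delicate point is the design of the scheme rather than its verification. Since $g\circ f$ and $f\circ g$ always possess wandering domains simultaneously (Theorem~\ref{sec1,thm1}), the construction must be asymmetric: routing $g(B_{0,q})$ back into $G_{0,q}$ is what forces $g\circ f$ to fix the component of $G_{0,q}$, while sending $g(G_{0,q})$ instead to $B_{1,q}$ feeds the disjoint ``staircase'' $B_{1,q}\to B_{2,q}\to B_{3,q}\to\cdots$, advanced alternately by $f$ and by $g$, which is exactly what makes the $f\circ g$-orbit of $G_{0,q}$ escape to infinity. Everything after that is bookkeeping: replacing each symbolic target such as ``$B_{2j,q}$'' by its actual centre on the negative real axis furnished by the array parametrisation, fixing all the auxiliary radii before $\alpha$ and $\beta$ are written down, and checking that every connected piece of $S$ — the vertical spikes attached to the disks and the lines $L_{k},M_{k}$ included — has been assigned a value, so that $\alpha$ and $\beta$ are indeed continuous on $S$ and analytic on $S^{\circ}$.
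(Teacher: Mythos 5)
Your proposal is correct and follows essentially the paper's own construction: the same Carleman set $S$, the same approximation of prescribed locally constant logarithms $\alpha,\beta$ with radii from the continuity of $e^{z}$, and the same asymmetric routing that puts $G_{0,q}$ in a two-cycle for each of $f$ and $g$ and in a periodic component of $g\circ f$ while $f\circ g$ feeds it into a staircase of pairwise disjoint $B$-disks. The only differences are bookkeeping: the paper's wandering $f\circ g$-orbit advances along a row ($B_{3,q}\to B_{3,q+1}\to\cdots$) rather than down a column, and its $g\circ f$ produces a two-cycle of components $G_{0,q}\to B_{2,q}\to G_{0,q}$ instead of a fixed component, but the mechanism is identical.
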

\begin{proof}
Let $S,\delta_0,\delta_q,\delta_q'$ be as in Theorem \ref{sec2,thm1}. Choose $\delta_q'',\delta_{1,q},\delta_{2,q}$ such that
\[|e^w+(4(\frac{q(q-1)}{2}+3q+7)+2)|<\frac{1}{2},\;\mbox{whenever}\; |w-(\pi i+\log(4(\frac{q(q-1)}{2}+3q+7)+2))|<\delta_{1,q},\]
\[|e^w+(4(\frac{q(q-1)}{2}+q+2)+2)|<\frac{1}{2},\;\mbox{whenever}\; |w-(\pi i+\log(4(\frac{q(q-1)}{2}+q+2)+2))|<\delta_{2,q},\] and 
\[|e^w+(4(\frac{q(q-1)}{2}+2q+4)+2)|<\frac{1}{2},\;\mbox{whenever}\; |w-(\pi i+\log(4(\frac{q(q-1)}{2}+2q+4)+2))|<\delta_q''.\]
Also choose $\xi_{p,q}$ and $\eta_{p,q}$ such that
\begin{align*}
|e^w-(4(\frac{q(q+1)}{2}+1+p(q+1)+\frac{p(p+1)}{2})+2)|<\frac{1}{2},\;\mbox{whenever}\\
  |w-\log(4(\frac{q(q+1)}{2}+1+p(q+1)+\frac{p(p+1)}{2})+2)|<\xi_{p,q},
\end{align*} and
\begin{align*}
|e^w+|(4(\frac{q(q+1)}{2}+1+p(q+1)+\frac{p(p+1)}{2})+2)|<\frac{1}{2},\;\mbox{whenever}\\
 |w-(\pi i+\log(4(\frac{q(q+1)}{2}+1+p(q+1)+\frac{p(p+1)}{2})+2)|<\eta_{p,q}.
\end{align*}
Define
\begin{equation}\notag
\alpha(z)=
\begin{cases}
\log2, & z\in G_0\cup\B{\{{\bigcup}_{k=1}^{\ity}( L_k\cup M_k)\B\}}\\
\pi i+\log(4(\frac{q(q-1)}{2}+1)+2), & z\in G_{0,q},\;q=1,2,\ldots\\
\log(4(\frac{q(q-1)}{2}+1)+2), & z\in B_{0,q},\;q=1,2,\ldots\\
\pi i+\log(4(\frac{q(q-1)}{2}+3q+7)+2), & z\in B_{1,q},\;q=1,2,\ldots\\
\pi i+\log(4(\frac{q(q-1)}{2}+q+2)+2), & z\in B_{2,q},\;q=1,2,\ldots\\
\pi i+\log(4(\frac{q(q+1)}{2}+1+p(q+1)+\frac{p(p+1)}{2})+2), & z\in B_{p,q},\;p\geq 3,q=1,2,\ldots\\
\log(4(\frac{q(q+1)}{2}+1+p(q+1)+\frac{p(p+1)}{2})+2), & z\in G_{p,q},\;p,q=1,2,\ldots
\end{cases}
\end{equation}
and
\begin{equation}\notag
\epsilon(z)=
\begin{cases}
\delta_0, & z\in G_0\cup\B{\{{\bigcup}_{k=1}^{\ity}( L_k\cup M_k)\B\}}\\
\delta_q, & z\in G_{0,q},\;q=1,2,\ldots\\
\delta_q', & z\in B_{0,q},\;q=1,2,\ldots\\
\delta_{1,q}, & z\in B_{1,q},\;q=1,2,\ldots\\
\delta_{2,q}, & z\in B_{2,q},\;q=1,2,\ldots\\
\eta_{p,q}, & z\in B_{p,q},\;p\geq 3,q=1,2,\ldots\\
\xi_{p,q}, & z\in G_{p,q},\; p,q=1,2,\ldots
\end{cases}
\end{equation}
As in Theorem \ref{sec2,thm3}, there exist an entire function $\gamma(z)$ and consequently, an entire function $f(z)=e^{\gamma(z)}$ which satisfies
\[|f(z)-2|<\frac{1}{2},\quad z\in G_0\cup\B{\{{\bigcup}_{k=1}^{\ity}( L_k\cup M_k)\B\}},\]
\[|f(z)+(4(\frac{q(q-1)}{2}+1)+2)|<\frac{1}{2},\quad z\in G_{0,q},\;q=1,2,\ldots,\]
\[|f(z)-(4(\frac{q(q-1)}{2}+1)+2)|<\frac{1}{2},\quad z\in B_{0,q},\;q=1,2,\ldots,\]
\[|f(z)+(4(\frac{q(q-1)}{2}+3q+7)+2)|<\frac{1}{2},\quad z\in B_{1,q},\;q=1,2,\ldots,\]
\[|f(z)+(4(\frac{q(q-1)}{2}+q+2)+2)|<\frac{1}{2},\quad z\in B_{2,q},\;q=1,2,\ldots,\]
\[|f(z)+(4(\frac{q(q+1)}{2}+1+p(q+1)+\frac{p(p+1)}{2})+2)|<\frac{1}{2},\quad z\in B_{p,q},\;p\geq 3,q=1,2,\ldots,\] and 
\[|f(z)-(4(\frac{q(q+1)}{2}+1+p(q+1)+\frac{p(p+1)}{2})+2)|<\frac{1}{2},\quad z\in G_{p,q},\;p,q=1,2,\ldots.\]
 So the function $f$ satisfies
\[f\B(G_0\cup\B{\{{\bigcup}_{k=1}^{\ity}( L_k\cup M_k)\B\}}\B)\subset \{z:|z-2|<\frac{1}{2}\}\subset G_0,\]
\[f(G_{0,q})\subset\{z:|z+(4(\frac{q(q-1)}{2}+1)+2)|<\frac{1}{2}\}\subset B_{0,q},\;q=1,2,\ldots,\]
\[f(B_{0,q})\subset\{z:|z-(4(\frac{q(q-1)}{2}+1)+2)|<\frac{1}{2}\}\subset G_{0,q},\;q=1,2,\ldots,\]
\[f(B_{1,q})\subset\{z:|z+(4(\frac{q(q-1)}{2}+3q+7)+2)|<\frac{1}{2}\}\subset B_{3,q},\;q=1,2,\ldots,\]
\[f(B_{2,q})\subset\{z:|z+(4(\frac{q(q-1)}{2}+q+2)+2)|<\frac{1}{2}\}\subset B_{1,q},\;q=1,2,\ldots,\]
\[f(B_{p,q})\subset\{z:|z+(4(\frac{q(q+1)}{2}+1+p(q+1)+\frac{p(p+1)}{2})+2)|<\frac{1}{2}\}\subset B_{p,q+1},\;p\geq 3,q\geq 1,\] and 
\[f(G_{p,q})\subset\{z:|z-(4(\frac{q(q+1)}{2}+1+p(q+1)+\frac{p(p+1)}{2})+2|<\frac{1}{2}\}\subset G_{p,q+1},\;p,q=1,2,\ldots.\]
We now define
\begin{equation}\notag
\beta(z)=
\begin{cases}
\log2, & z\in G_0\cup\B{\{{\bigcup}_{k=1}^{\ity}( L_k\cup M_k)\B\}}\\
\pi i+\log(4(\frac{q(q-1)}{2}+q+2)+2), & z\in G_{0,q},\;q=1,2,\ldots\\
\pi i+\log(4(\frac{q(q-1)}{2}+2q+4)+2), & z\in B_{0,q},\;q=1,2,\ldots\\
\log(4(\frac{q(q-1)}{2}+1)+2), & z\in B_{1,q},\;q=1,2,\ldots\\
\pi i+\log(4(\frac{q(q+1)}{2}+1+p(q+1)+\frac{p(p+1)}{2})+2), & z\in B_{p,q},\;p\geq 2, q=1,2,\ldots\\

\log(4(\frac{q(q+1)}{2}+1+p(q+1)+\frac{p(p+1)}{2})+2, & z\in G_{p,q},\;p,q=1,2,\ldots
\end{cases}
\end{equation}
and
\begin{equation}\notag
\epsilon{_1}(z)=
\begin{cases}
\delta_0, & z\in G_0\cup\B{\{{\bigcup}_{k=1}^{\ity}( L_k\cup M_k)\B\}}\\
\delta_{2,q}, & z\in G_{0,q},\;q=1,2,\ldots\\
\delta_q'', & z\in B_{0,q},\;q=1,2,\ldots\\
\delta_q', & z\in B_{1,q},\;q=1,2,\ldots\\
\eta_{p,q}, & z\in B_{p,q},\;p\geq 2, q=1,2,\ldots\\
\xi_{p,q}, & z\in G_{p,q},\; p,q=1,2,\ldots
\end{cases}
\end{equation}
Again as in Theorem \ref{sec2,thm3}, there exist an entire function $g(z)=e^{\gamma_1(z)}$ which satisfies

\[|g(z)-2|<\frac{1}{2},\quad z\in G_0\cup\B{\{{\bigcup}_{k=1}^{\ity}( L_k\cup M_k)\B\}},\]
\[|g(z)+(4(\frac{q(q-1)}{2}+q+2)+2)|<\frac{1}{2},\quad  z\in G_{0,q},\;q=1,2,\ldots,\]
\[|g(z)+(4(\frac{q(q-1)}{2}+2q+4)+2)|<\frac{1}{2}, \quad z\in B_{0,q},\;q=1,2,\ldots,\]
\[|g(z)-(4(\frac{q(q-1)}{2}+1)+2)|<\frac{1}{2},\quad z\in B_{1,q},\;q=1,2,\ldots,\]
\[|g(z)+(4(\frac{q(q+1)}{2}+1+p(q+1)+\frac{p(p+1)}{2})+2)|<\frac{1}{2},\quad  z\in B_{p,q},\;p\geq 2, q=1,2,\ldots,\] and 
\[|g(z)-(4(\frac{q(q+1)}{2}+1+p(q+1)+\frac{p(p+1)}{2})+2)|<\frac{1}{2},\quad  z\in G_{p,q},\;p,q=1,2,\ldots.\]
Therefore the function $g$ satisfies
\[g\B( G_0\cup\B{\{{\bigcup}_{k=1}^{\ity}( L_k\cup M_k)\B\}}\B)\subset\{z:|z-2|<\frac{1}{2}\}\subset G_0,\]
\[g(G_{0,q})\subset\{z:|z+(4(\frac{q(q-1)}{2}+q+2)+2)|<\frac{1}{2}\}\subset B_{1,q},\;q=1,2,\ldots,\]
\[g(B_{0,q})\subset\{z:|z+(4(\frac{q(q-1)}{2}+2q+4)+2)|<\frac{1}{2}\}\subset B_{2,q},\;q=1,2,\ldots,\]
\[g(B_{1,q})\subset\{z:|z-(4(\frac{q(q-1)}{2}+1)+2)|<\frac{1}{2}\}\subset G_{0,q},\;q=1,2,\ldots,\]
\[g(B_{p,q})\subset\{z:|z+(4(\frac{q(q+1)}{2}+1+p(q+1)+\frac{p(p+1)}{2})+2)|<\frac{1}{2}\}\subset B_{p,q+1},\;p\geq 2, q\geq 1,\] and 
\[g(G_{p,q})\subset\{z:|z-(4(\frac{q(q+1)}{2}+1+p(q+1)+\frac{p(p+1)}{2})+2)|<\frac{1}{2}\}\subset G_{p,q+1,}\;p, q=1,2,\ldots.\]
It can now be easily seen that $G_{0,q}$ lies in the periodic component of $f$ and  periodic component of $g$ and also lies in the periodic component of $g\circ f$ but lies in the wandering component of $f\circ g.$
\end{proof}
\begin{theorem}\label{sec2,thm5}
There exist transcendental entire functions $f$ and $g$ having a domain which lies in the periodic component of $f$ and wandering component of $g$ and lies in the preperiodic component of $g\circ f$ and periodic component of $f\circ g.$
\end{theorem}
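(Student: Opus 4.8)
The plan is to follow the template of Theorems \ref{sec2,thm3} and \ref{sec2,thm4} almost verbatim: work on the same Carleman set $S$ of Theorem \ref{sec2,thm1}, choose via the continuity of $e^z$ the approximation radii ($\delta_0,\delta_q,\eta_{p,q},\xi_{p,q},\dots$) exactly as there, and produce entire functions $\gamma,\gamma_1$ from Theorem \ref{sec1,thm0} so that $f=e^{\gamma}$ and $g=e^{\gamma_1}$ each carry every block into a strictly smaller concentric disk of a prescribed target block. Writing $c_{0,q}=4(\frac{q(q-1)}{2}+1)+2$ for the centre of $G_{0,q}$ (so $-c_{0,q}$ is the centre of $B_{0,q}$), a block is routed into the $G$ of centre $c$ by setting the relevant exponent ($\alpha$ for $f$, $\beta$ for $g$) equal to $\log c$ there, and into the $B$ of centre $-c$ by setting it equal to $\pi i+\log c$; the $\epsilon$'s are chosen so that each image lands within $\tfrac12$ of its target centre. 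Everything except the single row $p=0$ is inert: $G_0\cup\bigcup_{k=1}^{\infty}(L_k\cup M_k)$ is pinned near the attracting fixed point $2$ as before, and the rows $p\ge 1$ are sent down the harmless wandering chains $G_{p,q}\mapsto G_{p,q+1}$, $B_{p,q}\mapsto B_{p,q+1}$ by both $f$ and $g$.

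All the content lives in the routing of the row $p=0$, which I would prescribe, for every $q\ge 1$, by
\[
f:\ G_{0,q}\mapsto B_{0,q},\quad B_{0,q}\mapsto G_{0,q}; \qquad g:\ G_{0,q}\mapsto B_{0,q},\quad B_{0,q}\mapsto B_{0,q+1}.
\]
In terms of the exponents this means $\alpha=\pi i+\log c_{0,q}$ on $G_{0,q}$ and $\alpha=\log c_{0,q}$ on $B_{0,q}$, while $\beta=\pi i+\log c_{0,q}$ on $G_{0,q}$ and $\beta=\pi i+\log c_{0,q+1}$ on $B_{0,q}$. Having fixed the prescriptions, the four assertions are read off by chasing blocks. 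Since $f$ interchanges $G_{0,q}$ and $B_{0,q}$, the component of $F(f)$ carrying $G_{0,q}$ is periodic of period two. Since $g$ pushes $G_{0,q}\mapsto B_{0,q}\mapsto B_{0,q+1}\mapsto B_{0,q+2}\mapsto\cdots$ through pairwise disjoint blocks, $G_{0,q}$ lies in a wandering component of $g$. For the first composition, $g$ sends $G_{0,q}$ into $B_{0,q}$ and then $f$ sends $B_{0,q}$ back into $G_{0,q}$, so $f\circ g$ maps $G_{0,q}$ into a relatively compact subset of itself and $G_{0,q}$ lies in a fixed, hence periodic, component of $F(f\circ g)$. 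For the second composition, $f$ sends $G_{0,q}$ into $B_{0,q}$ and $g$ sends $B_{0,q}$ into $B_{0,q+1}$, whereas $f$ sends $B_{0,q+1}$ into $G_{0,q+1}$ and $g$ sends $G_{0,q+1}$ back into $B_{0,q+1}$; thus the $g\circ f$-orbit of $G_{0,q}$ is $G_{0,q}\to B_{0,q+1}\to B_{0,q+1}\to\cdots$, stabilising at the fixed block $B_{0,q+1}$ without ever returning to $G_{0,q}$, so $G_{0,q}$ lies in a preperiodic but not periodic component of $g\circ f$. As in Theorem \ref{sec2,thm1}, the fact that unbounded Fatou components are simply connected lets one identify $G_{0,q}$ with the common piece of the relevant components and guarantees that $G_{0,q}$ and $B_{0,q+1}$ lie in genuinely different components, so that the tail of the $g\circ f$-orbit is real and that component is not periodic.

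The heart of the matter, and the step I expect to require the most care, is engineering the asymmetry between the two compositions while keeping $G_{0,q}$ periodic for $f$ and wandering for $g$. Applying $g$ then $f$ must \emph{close} the orbit of $G_{0,q}$ into a cycle, forcing $f\circ g$ periodic, whereas applying $f$ then $g$ must \emph{open} a one-step tail onto a different fixed block, forcing $g\circ f$ strictly preperiodic. The routing above achieves exactly this: the return leg $B_{0,q}\mapsto G_{0,q}$ of the $f$-cycle is precisely what $f\circ g$ needs to come home, while the escape leg $B_{0,q}\mapsto B_{0,q+1}$ built into $g$ simultaneously makes $g$ wandering and supplies the tail for $g\circ f$. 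The consistency one must watch is that no block is ever assigned two targets under the \emph{same} function; since the interchange is imposed only on $f$ and the escape only on $g$, the two sets of instructions never collide. The remaining verifications---continuity of $\alpha,\beta$ on $S$ and analyticity on $S^\circ$, the contraction estimates $|f-\cdot|<\tfrac12$ and $|g-\cdot|<\tfrac12$, and membership of every block in the respective Fatou sets---are routine transcriptions of the corresponding passages in Theorems \ref{sec2,thm3} and \ref{sec2,thm4}.
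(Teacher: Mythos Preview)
Your construction is correct and actually proves a bit more than the theorem asks. By swapping $G_{0,q}\leftrightarrow B_{0,q}$ under $f$ and routing $G_{0,q}\to B_{0,q}\to B_{0,q+1}\to\cdots$ under $g$, you get $f\circ g:G_{0,q}\to G_{0,q}$ (periodic, period $1$) and $g\circ f:G_{0,q}\to B_{0,q+1}\to B_{0,q+1}$ (strictly preperiodic), exactly as required; the separation of $G_{0,q}$ from $B_{0,q+1}$ in $F(g\circ f)$ is secured by the vertical lines $L_k$, which lie in the basin of the attracting point near $2$ and must be crossed by any arc joining the two disks (this is really the operative fact, rather than simple connectivity of unbounded components, though the paper is equally casual on this point).

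The paper takes a different, slightly less economical route. It drops the $(p,q)$ double indexing, works with the single sequence $G_k,B_k$, and builds a \emph{four}-cycle for $f$, namely $G_1\to B_1\to G_2\to B_2\to G_1$, with $g$ sending $G_1\to B_1$, $G_2\to B_2$ and $B_k\to B_{k+1}$; then $f\circ g:G_1\to G_2\to G_1$ (period $2$) while $g\circ f:G_1\to B_2\to B_1\to B_2\to\cdots$ is preperiodic. Your $2$-cycle for $f$ is simpler, and because you kept the infinite row $q=1,2,\ldots$ in play you in fact obtain infinitely many domains $G_{0,q}$ with the stated property, whereas the paper exhibits only the single domain $G_1$ (and reserves the remaining blocks for the side observations in Remark~\ref{sec2,rem1}). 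The trade-off is minor: your version needs the heavier bookkeeping of Theorems~\ref{sec2,thm1}--\ref{sec2,thm4}, while the paper's version is easier to write out explicitly but yields a weaker conclusion.
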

\begin{proof}
Let $S$ be the Carleman set defined as in Theorem \ref{sec2,thm1}.
Using the continuity of $e^z,$ for each $k=1,2,\ldots,$ choose $\eta_k$ and $\xi_k$ such that
\[|e^w+(4k+6)|<\frac{1}{2},\; \mbox{whenever}\; |w-(\pi i+\log(4k+6))|<\eta_k,\] and
\[|e^w-(4k+6)|<\frac{1}{2},\; \mbox{whenever}\; |w-\log(4k+6)|<\xi_k.\]
Also choose $\delta_0,\delta_1,\delta_2,\delta_3,\delta_4$ so that
\[|e^w-2|<\frac{1}{2},\; \mbox{whenever}\; |w-\log 2|<\delta_0,\]
\[|e^w+6|<\frac{1}{2},\; \mbox{whenever}\; |w-(\pi i+\log 6)|<\delta_1,\]
\[|e^w-6|<\frac{1}{2},\; \mbox{whenever}\; |w-\log 6|<\delta_2,\]
\[|e^w-10|<\frac{1}{2}, \;\mbox{whenever}\; |w-\log 10|<\delta_3,\] and
\[|e^w+10|<\frac{1}{2},\; \mbox{whenever}\; |w-(\pi i+\log 10)|<\delta_4.\]
We next define
\begin{equation}\notag
\alpha(z)=
\begin{cases}
\log 2,  & z\in G_0\cup\B{\{{\bigcup}_{k=1}^{\ity}( L_k\cup M_k)\B\}}\\
 \pi i+\log 6,   & z\in G_1\\
\log 6,   & z\in B_2\\
\pi i+\log 10,   & z\in G_2\\
\log 10,   & z\in B_1\\
 \pi i+\log(4k+6),   & z\in B_k,\;k=3,4,\ldots\\
\log(4k+6),   & z\in G_k,\;k=3,4,\ldots
\end{cases}
\end{equation}
\begin{equation}\notag
\beta(z)=
\begin{cases}
\log 2,  & z\in G_0\cup\B{\{{\bigcup}_{k=1}^{\ity}( L_k\cup M_k)\B\}}\\
 \pi i+\log 6,   & z\in G_1\\
\pi i+\log 10,   & z\in G_2\\
 \pi i+\log(4k+6),   & z\in B_k,\;k=1,2,\ldots\\
\log(4k+6),   & z\in G_k,\;k=3,4,\ldots
\end{cases}
\end{equation}
\begin{equation}\notag
\epsilon(z)=
\begin{cases}
\delta_0,   & z\in G_0\cup\B{\{{\bigcup}_{k=1}^{\ity}( L_k\cup M_k)\B\}}\\
\delta_1,   & z\in G_1\\
\delta_2,   & z\in B_2\\
\delta_4,   & z\in G_2\\
\delta_3,   & z\in B_1\\
 \eta_k,   & z\in B_k,\;k=3,4,\ldots\\
\xi_k,   & z\in G_k,\;k=3,4,\ldots
\end{cases}
\end{equation}
and
\begin{equation}\notag
\epsilon_1(z)=
\begin{cases}
\delta_0,  & z\in G_0\cup\B{\{{\bigcup}_{k=1}^{\ity}( L_k\cup M_k)\B\}}\\
 \delta_1,   & z\in G_1\\
\delta_4,   & z\in G_2\\
 \eta_k,   & z\in B_k,\;k=1,2,\ldots\\
\xi_k,   & z\in G_k,\;k=3,4,\ldots
\end{cases}
\end{equation}
Clearly $\alpha(z)$ is continuous on $S$ and analytic in $S^\circ.$ So there is an entire function $\gamma(z)$ such that $|\gamma(z)-\alpha(z)|<\epsilon(z)$ for all $z\in S.$ The function $f(z)=e^{\gamma(z)}$ is an entire function and it satisfies
\[|f(z)-2|<\frac{1}{2},\quad z\in G_0\cup\B{\{{\bigcup}_{k=1}^{\ity}( L_k\cup M_k)\B\}},\]
\[|f(z)+6|<\frac{1}{2},\quad z\in G_1,\]
\[|f(z)-6|<\frac{1}{2},\quad z\in B_2,\]
\[|f(z)+10|<\frac{1}{2},\quad z\in G_2,\]
\[|f(z)-10|<\frac{1}{2},\quad z\in B_1,\]
\[|f(z)+(4k+6)|<\frac{1}{2},\quad z\in B_k,\;k=3,4,\ldots,\] and
\[|f(z)-(4k+6)|<\frac{1}{2},\quad z\in G_k,\;k=3,4,\ldots.\]
Also $\beta(z)$ is  continuous  on $S$ and analytic in $S^\circ,$  there exist an entire function $\gamma_1(z)$  such that if $g(z)=e^{\gamma_1(z)},$ then
\[|g(z)-2|<\frac{1}{2},\quad z\in G_0\cup\B{\{{\bigcup}_{k=1}^{\ity}( L_k\cup M_k)\B\}},\]
\[|g(z)+6|<\frac{1}{2},\quad z\in G_1,\]
\[|g(z)+10|<\frac{1}{2},\quad z\in G_2,\]
\[|g(z)+(4k+6)|<\frac{1}{2},\quad z\in B_k,\;k=1,2,\ldots,\] and
\[|g(z)-(4k+6)|<\frac{1}{2},\quad z\in G_k,\;k=3,4,\ldots.\]
It can now be easily seen that $G_1$ lies in the periodic component of $f$ and wandering component of $g$ and lies in the preperiodic component of $g\circ f$ and periodic component of $f\circ g.$
\end{proof}
\begin{remark}\label{sec2,rem1}
\begin{enumerate}
\item[(i)] $B_2$ lies in the periodic component of $f$ and wandering component of $g$ and lies in the periodic component of $g\circ f$ and wandering component of $f\circ g.$
\item[(ii)] $B_1$ lies in the periodic component of $f$ and wandering component of $g$ and lies in the periodic component of $g\circ f$ and preperiodic component of $f\circ g.$
\item[(iii)] $G_2$ lies in the periodic component of $f$ and wandering component of $g$ and lies in the wandering component of $g\circ f$ and periodic component of $f\circ g.$
\item[(iv)] Both $G_k,\;k\geq 3$ and $B_k,\;k\geq 3$ lies in the wandering component of $f$ and wandering component of $g$ and also lies in the wandering component of $g\circ f$ and the wandering component of $f\circ g.$ 
\end{enumerate}
\end{remark}
\begin{theorem}\label{sec2,thm7}
There exist transcendental entire functions $f$ and $g$ having  a domain which lies in the periodic component of $f$ and wandering component of $g$ and also lies in the wandering component of $g\circ f$ and wandering component of $f\circ g.$
\end{theorem}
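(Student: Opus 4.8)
The plan is to rerun the construction of Theorems~\ref{sec2,thm1} and~\ref{sec2,thm5}: keep the Carleman set $S$ of Theorem~\ref{sec2,thm1}, prescribe piecewise constant functions $\alpha,\beta$ on $S$, invoke Theorem~\ref{sec1,thm0} to obtain entire functions $\gamma,\gamma_1$ with $|\gamma-\alpha|<\epsilon$ and $|\gamma_1-\beta|<\epsilon_1$ on $S$, and set $f=e^{\gamma}$, $g=e^{\gamma_1}$. Since $\alpha,\beta$ will be locally constant, their continuity on $S$ and analyticity on $S^{\circ}$ are automatic, so Theorem~\ref{sec1,thm0} applies verbatim. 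The only new ingredient is the combinatorial design of the orbit structure, arranged so that the single domain $G_1$ is periodic for $f$, wandering for $g$, and wandering for both $g\circ f$ and $f\circ g$.

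The design I would use is the following. The map $f$ should carry the background set $G_0\cup\bigcup_{k=1}^{\ity}(L_k\cup M_k)$ into a small disk of $G_0$, interchange $G_1$ and $B_1$, and shift $G_k\mapsto G_{k+1}$, $B_k\mapsto B_{k+1}$ for $k\geq2$; accordingly put $\alpha\equiv\log2$ on the background, $\alpha\equiv\pi i+\log6$ on $G_1$, $\alpha\equiv\log6$ on $B_1$, $\alpha\equiv\pi i+\log(4k+6)$ on $B_k$ and $\alpha\equiv\log(4k+6)$ on $G_k$ for $k\geq2$. The map $g$ should again carry the background into $G_0$, send both $G_1$ and $B_1$ into a small disk of $B_2$, and shift $B_k\mapsto B_{k+1}$ for all $k\geq1$ and $G_k\mapsto G_{k+1}$ for $k\geq2$; accordingly put $\beta\equiv\log2$ on the background, $\beta\equiv\pi i+\log10$ on $G_1\cup B_1$, $\beta\equiv\pi i+\log(4k+6)$ on $B_k$ and $\beta\equiv\log(4k+6)$ on $G_k$ for $k\geq2$. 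The error functions $\epsilon,\epsilon_1$ are assembled as in Theorem~\ref{sec2,thm5} from constants $\delta_0,\delta_1,\delta_2,\delta_3$ and $\eta_k,\xi_k$ $(k\geq2)$ chosen, by continuity of $e^z$, so that $|e^w-2|<\frac{1}{2}$ near $\log2$, $|e^w+6|<\frac{1}{2}$ near $\pi i+\log6$, $|e^w-6|<\frac{1}{2}$ near $\log6$, $|e^w+10|<\frac{1}{2}$ near $\pi i+\log10$, $|e^w+(4k+6)|<\frac{1}{2}$ near $\pi i+\log(4k+6)$ and $|e^w-(4k+6)|<\frac{1}{2}$ near $\log(4k+6)$.

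With $f$ and $g$ so obtained the verification is pure orbit bookkeeping. From $f(G_1)\subset\{|z+6|<\frac{1}{2}\}\subset B_1$ and $f(B_1)\subset\{|z-6|<\frac{1}{2}\}\subset G_1$ it follows that $f^2$ maps $G_1$ into the disk $\{|z-6|<\frac{1}{2}\}$ of $G_1$, so $G_1$ contains a point $\zeta$ with $f^2(\zeta)=\zeta$ and $f^{2n}\to\zeta$ on $G_1$; hence $G_1$ lies in a periodic component of $f$. From $g(G_1)\subset B_2$ and $g(B_k)\subset B_{k+1}$ $(k\geq2)$ one gets $g^n(G_1)\subset B_{n+1}$ for every $n\geq1$, so $g^n\to\ity$ on $G_1$ and $G_1$ lies in a wandering component of $g$. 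For the compositions, $(g\circ f)(G_1)=g(f(G_1))\subset g(B_1)\subset B_2$ and $(g\circ f)(B_k)=g(f(B_k))\subset g(B_{k+1})\subset B_{k+2}$ for $k\geq2$, so the $(g\circ f)$-orbit of $G_1$ runs through $G_1,B_2,B_4,B_6,\dots$; likewise $(f\circ g)(G_1)=f(g(G_1))\subset f(B_2)\subset B_3$ and $(f\circ g)(B_k)=f(g(B_k))\subset f(B_{k+1})\subset B_{k+2}$ for $k\geq2$, so the $(f\circ g)$-orbit of $G_1$ runs through $G_1,B_3,B_5,B_7,\dots$. In each case the orbit visits pairwise disjoint domains, which yields the assertion.

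The routine points (the background set, and every $G_k$ and $B_k$, lying in $F(f)\cap F(g)$) go exactly as before. The one delicate step, and as in all of Theorems~\ref{sec2,thm1}--\ref{sec2,thm5} the real content behind the word \emph{wandering}, is to check that the successively visited domains $B_2,B_4,\dots$ (respectively $B_3,B_5,\dots$) actually lie in \emph{distinct} Fatou components, rather than merely that the orbit leaves every compact set; this is handled precisely as in the earlier proofs, via the separating structure of $S$ together with the fact that an unbounded Fatou component of a transcendental entire function is simply connected \cite{baker1}. Everything else is bookkeeping identical to that of Theorem~\ref{sec2,thm5}.
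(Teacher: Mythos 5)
Your proposal is correct and takes essentially the same approach as the paper: the same Carleman set $S$, the same approximation of locally constant $\alpha,\beta$ yielding $f=e^{\gamma}$, $g=e^{\gamma_1}$, and the same orbit bookkeeping, differing only in the cosmetic choice of combinatorics (you give $f$ the $2$-cycle $G_1\leftrightarrow B_1$ and send $g(G_1)$ into $B_2$, while the paper uses the $3$-cycle $G_1\to B_1\to B_2\to G_1$ with $g(G_1)\subset B_1$, both producing wandering orbits of $G_1$ under $g$, $g\circ f$ and $f\circ g$). Your closing point about verifying that the successively visited $B_k$ lie in distinct Fatou components, using the separating lines $L_k, M_k$ attracted to the fixed point in $G_0$, is exactly the step the paper leaves as ``easily seen,'' so nothing essential is missing.
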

\begin{proof}
Let $S$ be the Carleman set defined as in Theorem \ref{sec2,thm1}.
Using the continuity of $e^z,$ for each $k=1,2,\ldots,$ choose $\delta_0,\delta_1,\delta_2,\delta_3,\eta_k,\xi_k$ such that
\[|e^w-2|<\frac{1}{2},\; \mbox{whenever}\; |w-\log 2|<\delta_0,\]
\[|e^w+6|<\frac{1}{2},\; \mbox{whenever}\; |w-(\pi i+\log 6)|<\delta_1,\]
\[|e^w+10|<\frac{1}{2}, \;\mbox{whenever}\; |w-(\pi i+\log 10)|<\delta_2,\]
\[|e^w-6|<\frac{1}{2}, \;\mbox{whenever}\; |w-\log 6|<\delta_3,\] 
\[|e^w+(4k+6)|<\frac{1}{2},\; \mbox{whenever}\; |w-(\pi i+\log(4k+6))|<\eta_k,\] and
\[|e^w-(4k+6)|<\frac{1}{2},\; \mbox{whenever}\; |w-\log(4k+6)|<\xi_k.\]
Define
\begin{equation}\notag
\alpha(z)=
\begin{cases}
\log 2,  & z\in G_0\cup\B{\{{\bigcup}_{k=1}^{\ity}( L_k\cup M_k)\B\}}\\
 \pi i+\log 6,   & z\in G_1\\
\pi i+\log 10,   & z\in B_1\\
\log 6,   & z\in B_2\\ 
 \pi i+\log(4k+6),   & z\in B_k,\;k=3,4,\ldots\\
\log(4k+6),   & z\in G_k,\;k=2,3,\ldots
\end{cases}
\end{equation}
\begin{equation}\notag
\beta(z)=
\begin{cases}
\log 2,  & z\in G_0\cup\B{\{{\bigcup}_{k=1}^{\ity}( L_k\cup M_k)\B\}}\\
 \pi i+\log 6,   & z\in G_1\\
 \pi i+\log(4k+6),   & z\in B_k,\;k=1,2,\ldots\\
\log(4k+6),   & z\in G_k,\;k=2,3,\ldots
\end{cases}
\end{equation}
\begin{equation}\notag
\epsilon(z)=
\begin{cases}
\delta_0,   & z\in G_0\cup\B{\{{\bigcup}_{k=1}^{\ity}( L_k\cup M_k)\B\}}\\
\delta_1,   & z\in G_1\\
\delta_2,   & z\in B_1\\
\delta_3,   & z\in B_2\\
\eta_k,   & z\in B_k,\;k=3,4,\ldots\\
\xi_k,   & z\in G_k,\;k=2,3,\ldots\\
\end{cases}
\end{equation}
and
\begin{equation}\notag
\epsilon_1(z)=
\begin{cases}
\delta_0,   & z\in G_0\cup\B{\{{\bigcup}_{k=1}^{\ity}( L_k\cup M_k)\B\}}\\
\delta_1,   & z\in G_1\\
\eta_k,   & z\in B_k,\;k=1,2,\ldots\\
\xi_k,   & z\in G_k,\;k=2,3,\ldots\\
\end{cases}
\end{equation}
Clearly  $\alpha(z)$ is continuous on $S$ and analytic in $S^\circ.$ Thus there is an entire function $\gamma(z)$ such that $|\gamma(z)-\alpha(z)|<\epsilon(z)$ for all $z\in S.$ The function $f(z)=e^{\gamma(z)}$ is an entire function and it satisfies
\[|f(z)-2|<\frac{1}{2},\quad z\in G_0\cup\B{\{{\bigcup}_{k=1}^{\ity}( L_k\cup M_k)\B\}},\]
\[|f(z)+6|<\frac{1}{2},\quad z\in G_1,\]
\[|f(z)+10|<\frac{1}{2},\quad z\in B_1,\]
\[|f(z)-6|<\frac{1}{2},\quad z\in B_2,\]
\[|f(z)+(4k+6)|<\frac{1}{2},\quad z\in B_k,\;k=3,4,\ldots,\] and
\[|f(z)-(4k+6)|<\frac{1}{2},\quad z\in G_k,\;k=2,3,\ldots.\]
Also $\beta(z)$ is  continuous  on $S$ and analytic in $S^\circ,$  there exist an entire function $\gamma_1(z)$  such that if $g(z)=e^{\gamma_1(z)}$ then
\[|g(z)-2|<\frac{1}{2},\quad z\in G_0\cup\B{\{{\bigcup}_{k=1}^{\ity}( L_k\cup M_k)\B\}},\]
\[|g(z)+6|<\frac{1}{2},\quad z\in G_1,\]
\[|g(z)+(4k+6)|<\frac{1}{2},\quad z\in B_k,\;k=1,2,\ldots,\] and
\[|g(z)-(4k+6)|<\frac{1}{2},\quad z\in G_k,\;k=2,3,\ldots.\]
It can now be easily seen that $G_1$ lies in the periodic component of $f$ and wandering component of $g$ and also lies in the wandering component of $g\circ f$ and wandering component of $f\circ g.$
\end{proof}
\begin{remark}\label{sec2,rem2}
\begin{enumerate}
\item[(i)] Both $B_1$ and $B_2$ lies in the periodic component of $f$ and wandering component of $g$ and also lies in the wandering component of $g\circ f$ and the wandering component of $f\circ g.$
\item[(ii)] Both $B_k,\;k\geq 3$ and $G_k,\;k\geq 2$ lies in the wandering component of $f$ and wandering component of $g$ and also lies in the wandering component of $g\circ f$ and the wandering component of $f\circ g.$
\end{enumerate}
\end{remark}
\begin{theorem}\label{sec2,thm9}
There exist transcendental entire functions $f$ and $g$ having a domain which lies in the periodic component of $f$ and wandering component of $g$ and  lies in the periodic component of $g\circ f$ and periodic component of $f\circ g.$
\end{theorem}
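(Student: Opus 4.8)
The plan is to follow the template already used in Theorems~\ref{sec2,thm1}--\ref{sec2,thm7}: on the Carleman set $S$ of Theorem~\ref{sec2,thm1} one prescribes, block by block, the value that $f$ (respectively $g$) should be close to, records this in continuous functions $\alpha$ and $\beta$ on $S$ (using the branch $\log$, with a $\pi i$ summand whenever the prescribed point lies in the left half-plane), chooses, by the continuity of $e^z$, tolerances $\epsilon$ and $\epsilon_1$ small enough that the approximants furnished by Theorem~\ref{sec1,thm0} bring $f$ and $g$ within $\tfrac12$ of the prescribed value on each block, and sets $f=e^{\gamma}$ and $g=e^{\gamma_1}$. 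The prescription I would use is as follows. Both $f$ and $g$ send $G_0\cup\bigcup_{k=1}^{\ity}(L_k\cup M_k)$ into the disk of radius $\tfrac12$ about $2$, which lies in $G_0$; consequently this set contains an attracting fixed point and lies in $F(f)\cap F(g)$. The function $f$ further sends $G_1$ into a small disk of $B_1$ (so $\alpha=\pi i+\log 6$ on $G_1$), $B_1$ into a small disk of $G_1$ ($\alpha=\log 6$ on $B_1$), $G_2$ into a small disk of $G_1$ ($\alpha=\log 6$ on $G_2$), and sends the remaining blocks outward, $G_k\to G_{k+1}$ for $k\geq3$ and $B_k\to B_{k+1}$ for $k\geq2$. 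The function $g$ further sends $G_k$ into a small disk of $G_{k+1}$ for every $k\geq1$ ($\beta=\log(4k+6)$ on $G_k$), $B_1$ into a small disk of $G_1$ ($\beta=\log 6$ on $B_1$), and $B_k\to B_{k+1}$ for $k\geq2$.

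Granting this construction, the dynamics of the block $G_1$ read off at once. Under $f$ it runs through the two-cycle $G_1\to B_1\to G_1$, and the iterates of $f$ on $G_1$ stay inside the union of two fixed disks of radius $\tfrac12$, so $G_1\subset F(f)$ and its component is periodic (of period two). Under $g$ it runs through $G_1\to G_2\to G_3\to\cdots$, and these blocks lie in pairwise distinct Fatou components of $g$, each unbounded and hence simply connected by \cite[Theorem~1]{baker1}, so $G_1$ lies in a wandering component of $g$. Since $f(G_1)\subset B_1$ and then $g(B_1)\subset G_1$, the map $g\circ f$ carries $G_1$ into the disk of radius $\tfrac12$ about $6$ and carries that disk into itself; hence $G_1\subset F(g\circ f)$ and its component is periodic (indeed fixed) for $g\circ f$. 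Likewise, since $g(G_1)\subset G_2$ and then $f(G_2)\subset G_1$, the map $f\circ g$ carries $G_1$ into that same disk about $6$ and carries the disk into itself, so $G_1$ lies in a periodic (fixed) component of $f\circ g$. All the remaining blocks lie in $F(f)$, $F(g)$, $F(f\circ g)$ and $F(g\circ f)$ for the reason used in Theorem~\ref{sec2,thm1}: under each of these maps the forward images of a block lie in a family of pairwise disjoint disks of radius $\tfrac12$ whose union omits more than two points of $\C$, so the iterates form a normal family by Montel's theorem.

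The one point that needs care, and which the preceding proofs also treat lightly, is the assertion that the blocks $G_2,G_3,\dots$ in the orbit of $G_1$ under $g$ lie in genuinely distinct Fatou components, for this is exactly what makes that orbit wandering rather than (pre)periodic. I would argue it as in the proof of Theorem~\ref{sec2,thm1}: each of these components is unbounded, hence simply connected, and two different blocks cannot share a Fatou component of $g$ because their images under $g$ lie in widely separated small disks. Everything else is a routine transcription of the bookkeeping already carried out in Theorems~\ref{sec2,thm1}--\ref{sec2,thm7}, after which one concludes that $G_1$ is a domain lying in the periodic component of $f$, the wandering component of $g$, the periodic component of $g\circ f$, and the periodic component of $f\circ g$, as required.
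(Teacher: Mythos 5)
Your proposal is correct and is essentially the paper's own method: the same Carleman set $S$, the same Gaier-type approximation giving $f=e^{\gamma}$ and $g=e^{\gamma_1}$ with block-by-block target values, only with a different (equally valid) routing of the blocks --- the paper has $f$ interchange $G_1$ and $B_1$, sends $B_2\to G_1$ and $B_k\to B_{k-1}$ for $k\geq 3$, and has $g$ send $G_1\to B_1\to B_3\to B_4\to\cdots$, so that $g\circ f$ realizes the two-cycle $G_1\to B_3\to G_1$ and $f\circ g$ fixes a disk in $G_1$, whereas you route the $g$-orbit of $G_1$ through $G_2,G_3,\ldots$ and make both compositions fix a disk about $6$ inside $G_1$; both bookkeepings establish the theorem. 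The one step to phrase carefully is the one you flag: that the components of $F(g)$ containing $G_1,G_2,\ldots$ are pairwise distinct does not quite follow from ``their images lie in widely separated small disks'' alone; the clean argument is that a Fatou component meeting two different blocks would contain an arc crossing one of the vertical lines $L_m$, which lie in the attracting basin of the fixed point near $2$, so the blocks would lie in that basin, contradicting that their $g$-iterates tend to infinity --- the paper is equally terse on this point, so your argument meets its standard once this remark is inserted.
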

\begin{proof}
Let $S$ be the Carleman set defined as in Theorem \ref{sec2,thm1}.
Using the continuity of $e^z,$ for each $k=1,2,\ldots,$ choose $\eta_k,\xi_k,\mu_k$  so that
\[|e^w+(4k+6)|<\frac{1}{2},\; \mbox{whenever}\; |w-(\pi i+\log(4k+6))|<\eta_k,\]
\[|e^w-(4k+6)|<\frac{1}{2},\; \mbox{whenever}\; |w-\log(4k+6)|<\xi_k,\] and
\[|e^w+(4k-2)|<\frac{1}{2},\; \mbox{whenever}\; |w-(\pi i+\log(4k-2))|<\mu_k.\]
 Also choose $\delta_0,\delta_1,\delta_2,\delta_3$ so that
\[|e^w-2|<\frac{1}{2},\; \mbox{whenever}\; |w-\log 2|<\delta_0,\]
\[|e^w+6|<\frac{1}{2},\; \mbox{whenever}\; |w-(\pi i+\log 6)|<\delta_1,\]
\[|e^w-6|<\frac{1}{2},\; \mbox{whenever}\; |w-\log 6|<\delta_2,\] and
\[|e^w+14|<\frac{1}{2},\; \mbox{whenever}\; |w-(\pi i+\log 14)|<\delta_3.\]
We next define
\begin{equation}\notag
\alpha(z)=
\begin{cases}
\log 2,  & z\in G_0\cup\B{\{{\bigcup}_{k=1}^{\ity}( L_k\cup M_k)\B\}}\\
 \pi i+\log 6,   & z\in G_1\\
\log 6,   & z\in B_1\\ 
\log 6,   & z\in B_2\\ 
 \pi i+\log(4k-2),   & z\in B_k,\;k=3,4,\ldots\\
\log(4k+6),   & z\in G_k,\;k=2,3,\ldots
\end{cases}
\end{equation}
\begin{equation}\notag
\beta(z)=
\begin{cases}
\log 2,  & z\in G_0\cup\B{\{{\bigcup}_{k=1}^{\ity}( L_k\cup M_k)\B\}}\\
 \pi i+\log 6,   & z\in G_1\\
\log 6,   & z\in B_2\\ 
 \pi i+\log 14,   & z\in B_1\\
 \pi i+\log(4k+6),   & z\in B_k,\;k=3,4,\ldots\\
\log(4k+6),   & z\in G_k,\;k=2,3,\ldots
\end{cases}
\end{equation}
\begin{equation}\notag
\epsilon(z)=
\begin{cases}
\delta_0,   & z\in G_0\cup\B{\{{\bigcup}_{k=1}^{\ity}( L_k\cup M_k)\B\}}\\
\delta_1,   & z\in G_1\\
\delta_2,   & z\in B_1\\
\delta_2,   & z\in B_2\\
\mu_k,   & z\in B_k,\;k=3,4,\ldots\\
\xi_k,   & z\in G_k,\;k=2,3,\ldots\\
\end{cases}
\end{equation}
and
\begin{equation}\notag
\epsilon_1(z)=
\begin{cases}
\delta_0,   & z\in G_0\cup\B{\{{\bigcup}_{k=1}^{\ity}( L_k\cup M_k)\B\}}\\
\delta_1,   & z\in G_1\\
\delta_2,   & z\in B_2\\
\delta_3,   & z\in B_1\\
\eta_k,   & z\in B_k,\;k=3,4,\ldots\\
\xi_k,   & z\in G_k,\;k=2,3,\ldots\\
\end{cases}
\end{equation}
Clearly  $\alpha(z)$ is continuous on $S$ and analytic in $S^\circ.$ So there is an entire function $\gamma(z)$ such that $|\gamma(z)-\alpha(z)|<\epsilon(z)$ for all $z\in S.$ The function $f(z)=e^{\gamma(z)}$ is an entire function and it satisfies
\[|f(z)-2|<\frac{1}{2},\quad z\in G_0\cup\B{\{{\bigcup}_{k=1}^{\ity}( L_k\cup M_k)\B\}},\]
\[|f(z)+6|<\frac{1}{2},\quad z\in G_1,\]
\[|f(z)-6|<\frac{1}{2},\quad z\in B_1,\]
\[|f(z)-6|<\frac{1}{2},\quad z\in B_2,\]
\[|f(z)+(4k-2)|<\frac{1}{2},\quad z\in B_k,\;k=3,4,\ldots,\] and
\[|f(z)-(4k+6)|<\frac{1}{2},\quad z\in G_k,\;k=2,3,\ldots.\]
Also $\beta(z)$ is a continuous function on $S$ and analytic in $S^\circ,$  there exist an entire function $\gamma_1(z)$  such that if $g(z)=e^{\gamma_1(z)}$ then
\[|g(z)-2|<\frac{1}{2},\quad z\in G_0\cup\B{\{{\bigcup}_{k=1}^{\ity}( L_k\cup M_k)\B\}},\]
\[|g(z)+6|<\frac{1}{2},\quad z\in G_1,\]
\[|g(z)-6|<\frac{1}{2},\quad z\in B_2,\]
\[|g(z)+14|<\frac{1}{2},\quad z\in B_1,\]
\[|g(z)+(4k+6)|<\frac{1}{2},\quad z\in B_k,\;k=3,4,\ldots,\] and
\[|g(z)-(4k+6)|<\frac{1}{2},\quad z\in G_k,\;k=2,3,\ldots.\]
It can now be easily seen that $G_1$ lies in the periodic component of $f$ and wandering component of $g$ and  lies in the periodic component of $g\circ f$ and periodic component of $f\circ g.$
\end{proof}
\begin{remark}\label{sec2,rem3}
\begin{enumerate}
\item[(i)]  Similar to $G_1,$ $B_1$ lies in the periodic component of $f$ and wandering component of $g$ and lies in the periodic component of $g\circ f$ and periodic component of $f\circ g.$
\item[(ii)] $B_2$ lies in the preperiodic component of $f$ and wandering component of $g$ and lies in the preperiodic component of $g\circ f$ and periodic component of $f\circ g.$
\item[(iii)] $B_k,\;k\geq 3$ lies in the preperiodic component of $f$ and wandering component of $g$ and lies in the periodic component of $g\circ f$ and periodic component of $f\circ g.$
\item[(iv)] $G_k,\;k\geq 2$ lies in the wandering component of $f$ and wandering component of $g$ and also lies in the wandering component of $g\circ f$ and the wandering component of $f\circ g.$
\end{enumerate}
\end{remark}
\begin{theorem}\label{sec2,thm11}
There exist transcendental entire functions $f$ and $g$ having  a domain which lies in the periodic component of $f$ and wandering component of $g$ and lies in the  preperiodic component of $g\circ f$ and wandering component of $f\circ g.$
\end{theorem}
\begin{proof}
Let $S$ be the Carleman set defined as in Theorem \ref{sec2,thm1}.
Using the continuity of $e^z,$ for each $k=1,2,\ldots,$ choose $\eta_k,\xi_k$  such  that
\[|e^w+(4k+6)|<\frac{1}{2},\; \mbox{whenever}\; |w-(\pi i+\log(4k+6))|<\eta_k,\] and
\[|e^w-(4k+6)|<\frac{1}{2},\; \mbox{whenever}\; |w-\log(4k+6)|<\xi_k.\]
 Also choose $\delta_0,\delta_1,\delta_2,\delta_3,\delta_4,\delta_5$ so that
\[|e^w-2|<\frac{1}{2},\; \mbox{whenever}\; |w-\log 2|<\delta_0,\]
\[|e^w+6|<\frac{1}{2},\; \mbox{whenever}\; |w-(\pi i+\log 6)|<\delta_1,\]
\[|e^w-6|<\frac{1}{2},\; \mbox{whenever}\; |w-\log 6|<\delta_2,\]
\[|e^w+10|<\frac{1}{2},\; \mbox{whenever}\; |w-(\pi i+\log 10)|<\delta_3,\]
\[|e^w+14|<\frac{1}{2},\; \mbox{whenever}\; |w-(\pi i+\log 14)|<\delta_4,\] and
\[|e^w+18|<\frac{1}{2},\; \mbox{whenever}\; |w-(\pi i+\log 18)|<\delta_5.\]
Define
\begin{equation}\notag
\alpha(z)=
\begin{cases}
\log 2,  & z\in G_0\cup\B{\{{\bigcup}_{k=1}^{\ity}( L_k\cup M_k)\B\}}\\
 \pi i+\log 6,   & z\in G_1\\
\log 6,   & z\in B_1\\ 
 \pi i+\log 14,   & z\in B_2\\
 \pi i+\log 18,   & z\in B_3\\
 \pi i+\log 6,   & z\in  B_4\\
 \pi i+\log(4k+6),   & z\in B_k,\;k=5,6,\ldots\\
\log(4k+6),   & z\in G_k,\;k=2,3,\ldots
\end{cases}
\end{equation}
\begin{equation}\notag
\beta(z)=
\begin{cases}
\log 2,  & z\in G_0\cup\B{\{{\bigcup}_{k=1}^{\ity}( L_k\cup M_k)\B\}}\\
 \pi i+\log 10,   & z\in B_1\\
\log 6,   & z\in B_2\\ 
 \pi i+\log 18,   & z\in B_3\\
\pi i+\log 6,   & z\in  B_4\\
 \pi i+\log(4k+6),   & z\in B_k,\;k=5,6,\ldots\\
\log(4k+6),   & z\in G_k,\;k=1,2,\ldots
\end{cases}
\end{equation}
\begin{equation}\notag
\epsilon(z)=
\begin{cases}
\delta_0,   & z\in G_0\cup\B{\{{\bigcup}_{k=1}^{\ity}( L_k\cup M_k)\B\}}\\
\delta_1,   & z\in G_1\\
\delta_2,   & z\in B_1\\
\delta_4,   & z\in B_2\\
\delta_5,   & z\in B_3\\
\delta_1,   & z\in B_4\\
\eta_k,  & z\in B_k,\;k=5,6,\ldots\\
\xi_k,   & z\in G_k,\;k=2,3,\ldots\\
\end{cases}
\end{equation}
and
\begin{equation}\notag
\epsilon_1(z)=
\begin{cases}
\delta_0,   & z\in G_0\cup\B{\{{\bigcup}_{k=1}^{\ity}( L_k\cup M_k)\B\}}\\
\delta_3,   & z\in B_1\\
\delta_2,   & z\in B_2\\
\delta_5,   & z\in B_3\\
\delta_1,   & z\in B_4\\
\xi_k,   & z\in G_k,\;k=1,2,\ldots\\
\eta_k,   & z\in B_k,\;k=5,6,\ldots\\
\end{cases}
\end{equation}
Clearly  $\alpha(z)$ is continuous on $S$ and analytic in $S^\circ.$ Thus there is an entire function $\gamma(z)$ such that $|\gamma(z)-\alpha(z)|<\epsilon(z)$ for all $z\in S.$ The function $f(z)=e^{\gamma(z)}$ is an entire function and it satisfies
\[|f(z)-2|<\frac{1}{2},\quad z\in G_0\cup\B{\{{\bigcup}_{k=1}^{\ity}( L_k\cup M_k)\B\}},\]
\[|f(z)+6|<\frac{1}{2},\quad z\in G_1,\]
\[|f(z)-6|<\frac{1}{2},\quad z\in B_1,\]
\[|f(z)+14|<\frac{1}{2},\quad z\in B_2,\]
\[|f(z)+18|<\frac{1}{2},\quad z\in B_3,\]
\[|f(z)+6|<\frac{1}{2},\quad z\in B_4,\]
\[|f(z)+(4k+6)|<\frac{1}{2},\quad z\in B_k,\;k=5,6,\ldots,\] and
\[|f(z)-(4k+6)|<\frac{1}{2},\quad z\in G_k,\;k=2,3,\ldots.\]
Also $\beta(z)$ is a continuous function on $S$ and analytic in $S^\circ,$  there exist an entire function $\gamma_1(z)$  such that if $g(z)=e^{\gamma_1(z)}$ then
\[|g(z)-2|<\frac{1}{2},\quad z\in G_0\cup\B{\{{\bigcup}_{k=1}^{\ity}( L_k\cup M_k)\B\}},\]
\[|g(z)+10|<\frac{1}{2},\quad z\in B_1,\]
\[|g(z)-6|<\frac{1}{2},\quad z\in B_2,\]
\[|g(z)+18|<\frac{1}{2},\quad z\in B_3,\]
\[|g(z)+6|<\frac{1}{2},\quad z\in B_4,\]
\[|g(z)+(4k+6)|<\frac{1}{2},\quad z\in B_k,\;k=5,6,\ldots,\] and
\[|g(z)-(4k+6)|<\frac{1}{2},\quad z\in G_k,\;k=1,2,\ldots.\]
It can now be easily seen that $G_1$ lies in the periodic component of $f$ and wandering component of $g$ and  lies in the preperiodic component of $g\circ f$ and wandering component of $f\circ g.$
\end{proof}
\begin{remark}\label{sec2,rem4}
\begin{enumerate}
\item[(i)] $G_k,\;k\geq 2$ lies in the wandering component of $f$ and wandering component of $g$ and also lies in the wandering component of $g\circ f$ and the wandering component of $f\circ g.$
\item[(ii)] $B_1$ lies in the periodic component of $f$ and wandering component of $g$ and lies in the wandering component of $g\circ f$ and periodic component of $f\circ g.$
\item[(iii)] $B_2$ lies in the preperiodic component of $f$ and wandering component of $g$ and lies in the periodic component of $g\circ f$ and preperiodic component of $f\circ g.$
\item[(iv)] $B_3$ lies in the preperiodic component of $f$ and wandering component of $g$ and lies in the wandering component of $g\circ f$ and periodic component of $f\circ g.$
\item[(v)] $B_4$ lies in the preperiodic component of $f$ and wandering component of $g$ and lies in the periodic component of $g\circ f$ and wandering component of $f\circ g.$
\item[(vi)] $B_k,\;k\geq 5$ lies in the wandering component of $f$ and wandering component of $g$ and also lies in the wandering component of $g\circ f$ and the wandering component of $f\circ g.$
\end{enumerate}
\end{remark}
\begin{theorem}\label{sec2,thm13}
There exist transcendental entire functions $f$ and $g$ having  infinite number of  domains which lies in the preperiodic component of $f$ and  preperiodic component of $g$ and also lies in the  preperiodic component of $g\circ f$ and preperiodic component of $f\circ g.$
\end{theorem}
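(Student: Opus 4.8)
The plan is to follow the template of Theorems \ref{sec2,thm1}--\ref{sec2,thm4}: work on the very same Carleman set $S$, use the array labelling $G_{p,q},B_{p,q}$ of the bounded components of $S$ together with the abbreviation $n_{p,q}=\tfrac{q(q-1)}{2}+1+pq+\tfrac{p(p+1)}{2}$, prescribe two locally constant functions $\alpha,\beta$ on $S$ (with values of the form $\log 2$ or $\pi i+\log(4m+2)$), approximate them by entire functions via Theorem \ref{sec1,thm0}, and take $f=e^{\gamma}$, $g=e^{\gamma_1}$. The one design choice is to route the domains so that, for every $q=1,2,\ldots$, the single domain $G_{0,q}$ is \emph{eventually} swallowed by an attracting cycle under each of $f$, $g$, $g\circ f$ and $f\circ g$, while never returning into that cycle.

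Concretely, for each $q\geq1$ I would make $f$ send $G_{0,q}$ into a small subdisk of $B_{0,q}$, send $B_{0,q}$ into a small subdisk of $B_{1,q}$, and send $B_{1,q}$ back into a small subdisk of $B_{0,q}$ (so that $B_{0,q},B_{1,q}$ become an attracting $2$-cycle of Fatou components of $f$); and make $g$ send both $G_{0,q}$ and $B_{0,q}$ into a small subdisk of $B_{1,q}$ and send $B_{1,q}$ into a small subdisk of itself (so that $B_{1,q}$ is a fixed attracting component of $g$). Every remaining component of $S$ --- namely $G_0$, all the lines $L_k,M_k$, all $G_{p,q}$ with $p\geq1$ and all $B_{p,q}$ with $p\geq2$ --- is sent by both $f$ and $g$ into the disk $\{|z-2|<\tfrac{1}{2}\}\subset G_0$. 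This is arranged in the usual way: put $\alpha=\log 2$ (resp.\ $\beta=\log 2$) on that remaining part, put $\alpha=\pi i+\log(4n_{0,q}+2)$, $\pi i+\log(4n_{1,q}+2)$, $\pi i+\log(4n_{0,q}+2)$ on $G_{0,q},B_{0,q},B_{1,q}$ respectively (resp.\ $\beta=\pi i+\log(4n_{1,q}+2)$ on each of $G_{0,q},B_{0,q},B_{1,q}$), extract the tolerances $\delta_0,\eta_{p,q}$ from the continuity of $e^{z}$ exactly as in Theorem \ref{sec2,thm1}, and let $\epsilon$ (resp.\ $\epsilon_1$) equal the matching tolerance on each component. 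Since $S$ is a Carleman set and $\alpha,\beta$ are continuous on $S$ and analytic in $S^{\circ}$, Theorem \ref{sec1,thm0} yields entire $\gamma,\gamma_1$ with $|\gamma-\alpha|<\epsilon$ and $|\gamma_1-\beta|<\epsilon_1$ on $S$; both are non-constant, so $f=e^{\gamma}$ and $g=e^{\gamma_1}$ are transcendental entire and obey the estimates $|f(z)-2|<\tfrac{1}{2}$ off the active part, $|f(z)+(4n_{0,q}+2)|<\tfrac{1}{2}$ on $G_{0,q}$, $|f(z)+(4n_{1,q}+2)|<\tfrac{1}{2}$ on $B_{0,q}$, $|f(z)+(4n_{0,q}+2)|<\tfrac{1}{2}$ on $B_{1,q}$, and correspondingly for $g$.

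Reading off the dynamics then proceeds exactly as in the earlier proofs. First $G_0\cup\bigcup_{k\geq1}(L_k\cup M_k)$ lies in an attracting basin (a fixed point near $2$) of both $f$ and $g$. Fix $q$. Under $f$ the domains $B_{0,q},B_{1,q}$ form an attracting $2$-cycle of Fatou components and $f(G_{0,q})\subset B_{0,q}$, so $G_{0,q}\subset F(f)$ and its forward orbit of components is eventually this $2$-cycle while never being $G_{0,q}$ itself; hence $G_{0,q}$ lies in a preperiodic component of $f$. Likewise $B_{1,q}$ is a fixed attracting component of $g$ with $g(G_{0,q})\subset B_{1,q}$, so $G_{0,q}$ is preperiodic for $g$. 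For the compositions one computes $(g\circ f)(G_{0,q})\subset g(B_{0,q})\subset B_{1,q}$ and $(g\circ f)(B_{1,q})\subset g(f(B_{1,q}))\subset g(B_{0,q})\subset B_{1,q}$, so $B_{1,q}$ is a fixed component of $g\circ f$ into which $G_{0,q}$ is mapped; and $(f\circ g)(G_{0,q})\subset f(B_{1,q})\subset B_{0,q}$ together with $(f\circ g)(B_{0,q})\subset f(g(B_{0,q}))\subset f(B_{1,q})\subset B_{0,q}$ shows $B_{0,q}$ is a fixed component of $f\circ g$ into which $G_{0,q}$ is mapped. Thus each of the infinitely many domains $G_{0,q}$, $q\geq1$, lies in a preperiodic component of $f$, of $g$, of $g\circ f$ and of $f\circ g$.

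The two points needing care are the ones the earlier proofs also treat briefly. First, the relevant forward images must be \emph{compactly} contained in the next domain so that Montel gives normality and the named components indeed lie in the Fatou sets; this is immediate here because $f$ and $g$ contract each $G_{p,q},B_{p,q}$ into a disk of radius $\tfrac{1}{2}$. Second, $G_{0,q}$ must genuinely lie in a component different from those of $B_{0,q}$ and $B_{1,q}$, so that \emph{preperiodic} is not secretly \emph{periodic}; this follows, as in Theorems \ref{sec2,thm1}--\ref{sec2,thm4}, from the geometric separation of $G_{0,q}$ (near $4n_{0,q}+2$) from $B_{0,q},B_{1,q}$ (near $-(4n_{0,q}+2)$ and $-(4n_{1,q}+2)$). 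The only real obstacle is the combinatorial one resolved above: choosing the images of $B_{0,q}$ and $B_{1,q}$ under $f$ and $g$ so that all four of $f,g,g\circ f,f\circ g$ simultaneously drive $G_{0,q}$ into an attracting cycle, rather than into a wandering chain or back onto $G_{0,q}$ itself.
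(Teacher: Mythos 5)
Your proposal is correct and uses essentially the same method as the paper: the same Carleman set $S$, approximation of locally constant targets via Theorem \ref{sec1,thm0}, functions of the form $e^{\gamma}$ contracting each domain into a small disk, and the key idea of routing the $G$-domains into an attracting cycle of $B$-domains simultaneously under $f$, $g$, $g\circ f$ and $f\circ g$. The only difference is combinatorial: the paper works with the single-index labelling and funnels every $G_k$, $k\geq 1$, into one global $2$-cycle $B_1\leftrightarrow B_2$ (with $g$ shifting $G_k$ into $G_{k-1}$), whereas you build a separate cycle $B_{0,q}\leftrightarrow B_{1,q}$ in each column $q$ and send all remaining domains into the basin of the fixed point near $2$ — both routings yield infinitely many preperiodic domains, and your closing remark about distinguishing preperiodic from periodic is at the same level of detail as the paper's.
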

\begin{proof}
Let $S$ be the Carleman set defined as in Theorem \ref{sec2,thm1}.
Using the continuity of $e^z,$ for each $k=1,2,\ldots,$ choose  $\delta_0,\delta_1,\delta_2,\mu_k,\nu_k$ such that
\[|e^w-2|<\frac{1}{2},\; \mbox{whenever}\; |w-\log 2|<\delta_0,\]
\[|e^w+6|<\frac{1}{2},\; \mbox{whenever}\; |w-(\pi i+\log 6)|<\delta_1,\]
\[|e^w+10|<\frac{1}{2},\; \mbox{whenever}\; |w-(\pi i+\log 10)|<\delta_2,\]
\[|e^w+(4k-2)|<\frac{1}{2},\; \mbox{whenever}\; |w-(\pi i+\log(4k-2))|<\mu_k,\] and
\[|e^w-(4k-2)|<\frac{1}{2},\; \mbox{whenever}\; |w-\log(4k-2)|<\nu_k.\]
We next define
\begin{equation}\notag
\alpha(z)=
\begin{cases}
\log 2,  & z\in G_0\cup\B{\{{\bigcup}_{k=1}^{\ity}( L_k\cup M_k)\B\}}\\
 \pi i+\log 6,   & z\in G_k,\;k=1,2,\ldots\\
 \pi i+\log 10,   & z\in B_1\\
\pi i+\log(4k-2),   & z\in B_k,\;k=2,3,\ldots\\
\end{cases}
\end{equation}
\begin{equation}\notag
\beta(z)=
\begin{cases}
\log 2,  & z\in G_0\cup\B{\{{\bigcup}_{k=1}^{\ity}( L_k\cup M_k)\B\}}\\
\pi i+\log 6,   & z\in  G_1\\
 \pi i+\log 10,   & z\in B_1\\
 \pi i+\log(4k-2),   & z\in B_k,\;k=2,3,\ldots\\
\log(4k-2),   & z\in G_k,\;k=2,3,\ldots
\end{cases}
\end{equation}
\begin{equation}\notag
\epsilon(z)=
\begin{cases}
\delta_0,   & z\in G_0\cup\B{\{{\bigcup}_{k=1}^{\ity}( L_k\cup M_k)\B\}}\\
\delta_1,   & z\in G_k,\;k=1,2,\ldots\\
\delta_2,   & z\in B_1\\
\mu_k,  & z\in B_k,\;k=2,3,\ldots\\
\end{cases}
\end{equation}
and
\begin{equation}\notag
\epsilon_1(z)=
\begin{cases}
\delta_0,   & z\in G_0\cup\B{\{{\bigcup}_{k=1}^{\ity}( L_k\cup M_k)\B\}}\\
\delta_1,   & z\in G_1\\
\delta_2,   & z\in B_1\\
\mu_k,   & z\in B_k,\;k=2,3,\ldots\\
\nu_k,   & z\in G_k,\;k=2,3,\ldots\\
\end{cases}
\end{equation}
Clearly  $\alpha(z)$ is continuous on $S$ and analytic in $S^\circ.$ So there is an entire function $\gamma(z)$ such that $|\gamma(z)-\alpha(z)|<\epsilon(z)$ for all $z\in S.$ The function $f(z)=e^{\gamma(z)}$ is an entire function and it satisfies
\[|f(z)-2|<\frac{1}{2},\quad z\in G_0\cup\B{\{{\bigcup}_{k=1}^{\ity}( L_k\cup M_k)\B\}},\] 
\[|f(z)+6|<\frac{1}{2},\quad z\in G_k,\;k=1,2,\ldots,\]
\[|f(z)+10|<\frac{1}{2},\quad z\in B_1,\] and
\[|f(z)+(4k-2)|<\frac{1}{2},\quad z\in B_k,\;k=2,3,\ldots.\]
Also $\beta(z)$ is  continuous  on $S$ and analytic in $S^\circ,$  there exist an entire function $\gamma_1(z)$  such that if $g(z)=e^{\gamma_1(z)}$ then
\[|g(z)-2|<\frac{1}{2},\quad z\in G_0\cup\B{\{{\bigcup}_{k=1}^{\ity}( L_k\cup M_k)\B\}},\]
\[|g(z)+6|<\frac{1}{2},\quad z\in G_1,\]
\[|g(z)+10|<\frac{1}{2},\quad z\in B_1,\]
\[|g(z)+(4k-2)|<\frac{1}{2},\quad z\in B_k,\;k=2,3,\ldots,\] and
\[|g(z)-(4k-2)|<\frac{1}{2},\quad z\in G_k,\;k=2,3,\ldots.\]
It can now be easily seen that each $G_k,\;k=1,2,\ldots$ lies in the preperiodic component of $f$  and  preperiodic component of $g$ and also lies in the  preperiodic component of $g\circ f$ and preperiodic component of $f\circ g.$
\end{proof}
\begin{remark}\label{sec2,rem6}
\begin{enumerate}
\item[(i)] Both $B_1$ and $B_2$ lies in the periodic component of $f$ and periodic component of $g$ and also lies in the periodic component of $g\circ f$ and periodic component of $f\circ g.$
\item[(ii)] $B_k,\;k\geq 3$ lies in the preperiodic component of $f$ and preperiodic component of $g$ and also lies in the preperiodic component of $g\circ f$ and preperiodic component of $f\circ g.$
\end{enumerate}
\end{remark}
\begin{theorem}\label{sec2,thm15}
There exist transcendental entire functions $f$ and $g$ having a domain which lies in the preperiodic component of $f$ and  preperiodic component of $g$ and also lies in the  preperiodic component of $g\circ f$ but lies in the wandering component of $f\circ g.$
\end{theorem}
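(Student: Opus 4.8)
The plan is to reuse the Carleman set $S=G_0\cup\bigcup_{k=1}^{\infty}(G_k\cup B_k\cup L_k\cup M_k)$ of Theorem~\ref{sec2,thm1} and to produce $f=e^{\gamma}$ and $g=e^{\gamma_1}$ from Theorem~\ref{sec1,thm0} by prescribing $\gamma\approx\alpha$ and $\gamma_1\approx\beta$ on $S$, where $\alpha,\beta$ are designed so that the disk $G_1$ has four different fates under $f$, $g$, $g\circ f$ and $f\circ g$. As in the earlier proofs, one first invokes continuity of $e^{z}$ to choose the error functions $\epsilon,\epsilon_1$ so small that each prescribed value of $e^{\alpha}$ and $e^{\beta}$ falls within $\frac{1}{2}$ of the relevant centre (among $2$, $\pm 6$, $\pm 10$, $-14$, $\pm(4k+2)$), so that $f$ and $g$ act as contractions sending each unit disk of $S$ into a half-radius subdisk.

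The construction of $\alpha,\beta$ is the heart of the matter. On $G_0\cup\bigcup(L_k\cup M_k)$ put $\alpha=\beta=\log 2$, making this set the attracting basin of a fixed point near $2$ so that it lies in $F(f)\cap F(g)$, exactly as in Theorem~\ref{sec2,thm1}. For every $k\ge 2$ put $\alpha=\beta=\log(4k+6)$ on $G_k$, so that both $f$ and $g$ send $G_k$ into $G_{k+1}$; this escaping ``highway'' is what will force wandering behaviour. On the finitely many remaining pieces set: on $G_1$, $\alpha=\pi i+\log 6$ (so $f(G_1)\subset B_1$) and $\beta=\pi i+\log 10$ (so $g(G_1)\subset B_2$); on $B_1$, $\alpha=\pi i+\log 6$ and $\beta=\pi i+\log 14$ (so $f(B_1)\subset B_1$ and $g(B_1)\subset B_3$); on $B_2$, $\alpha=\log 10$ and $\beta=\pi i+\log 10$ (so $f(B_2)\subset G_2$ and $g(B_2)\subset B_2$); on $B_3$, $\alpha=\beta=\pi i+\log 14$ (so $f(B_3)\subset B_3$ and $g(B_3)\subset B_3$); and on $B_k$ for $k\ge 4$, $\alpha=\beta=\pi i+\log(4k+2)$, leaving these as harmless fixed components. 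These pieces of $S$ are pairwise disjoint, so $\alpha$ and $\beta$ are continuous on $S$ and, being locally constant, analytic in $S^{\circ}$; thus Theorem~\ref{sec1,thm0} produces $\gamma,\gamma_1$, hence $f,g$, with the stated mapping properties.

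The four behaviours of $G_1$ now read off from the mapping data. Under $f$: $f(G_1)\subset B_1$ and $f(B_1)\subset B_1$, so $f^{n}(G_1)\subset B_1$ for all $n\ge 1$ and $G_1$ lies in a preperiodic (non-periodic) component of $f$. Under $g$: $g(G_1)\subset B_2$ and $g(B_2)\subset B_2$, so $G_1$ lies in a preperiodic component of $g$. For $g\circ f$: $f(G_1)\subset B_1$ and $g(B_1)\subset B_3$ give $(g\circ f)(G_1)\subset B_3$, while $f(B_3)\subset B_3$ and $g(B_3)\subset B_3$ give $(g\circ f)(B_3)\subset B_3$; hence $(g\circ f)^{n}(G_1)\subset B_3$ for all $n\ge 1$ and $G_1$ lies in a preperiodic component of $g\circ f$. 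For $f\circ g$: $g(G_1)\subset B_2$ and $f(B_2)\subset G_2$ give $(f\circ g)(G_1)\subset G_2$, while for $k\ge 2$, $g(G_k)\subset G_{k+1}$ and $f(G_{k+1})\subset G_{k+2}$ give $(f\circ g)(G_k)\subset G_{k+2}$; thus $(f\circ g)^{n}(G_1)\subset G_{2n}$ for all $n\ge 1$, and since the disks $G_{2n}$ have pairwise distinct centres $8n+2$, $G_1$ lies in a wandering component of $f\circ g$. That every $G_k,B_k$ (hence $G_1$) lies in the relevant Fatou set among $F(f),F(g),F(g\circ f),F(f\circ g)$, that the connected disk $G_1$ therefore lies in a single component of each, and that distinct disks lie in distinct components, all follow as in Theorems~\ref{sec2,thm1}--\ref{sec2,thm4}: for the eventually-fixed orbits one uses the attracting fixed point in the terminal disk together with complete invariance of the Fatou set, and for the escaping highway one conjugates the $m$-th iterates by the translations $z\mapsto z-4m$ into self-maps of a unit disk with image in a fixed half-radius disk — a normal family — so that the untranslated iterates tend locally uniformly to $\infty$.

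The one genuinely delicate step, and the place to be careful, is that the values $\alpha(B_1),\alpha(B_2),\alpha(B_3),\beta(B_1),\beta(B_3)$ each enter the orbit of $G_1$ under $f$, under $g$, and under both composites, so all four required fates must be realised by a single consistent assignment. In particular the choice $\alpha(B_2)=\log 10$, which sends $f(B_2)$ back onto the escaping highway rather than letting $B_2$ be $f$-fixed, is exactly what makes the $f\circ g$-orbit of $G_1$ wandering, while the $g\circ f$-orbit, routed through $B_1$ into the fixed disk $B_3$, stays preperiodic. Once this bookkeeping is pinned down the argument parallels the earlier proofs, and a remark may record that $B_3$ and each $B_k$ ($k\ge 4$) are periodic for all four maps, each $G_k$ ($k\ge 2$) is wandering for all four, $B_1$ is periodic for $f$ and preperiodic for $g$, $g\circ f$, $f\circ g$, and $B_2$ is periodic for $g$ and wandering for $f$, $g\circ f$, $f\circ g$.
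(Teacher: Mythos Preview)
Your proof is correct and uses the same machinery as the paper: the Carleman set $S$ of Theorem~\ref{sec2,thm1}, approximation via Theorem~\ref{sec1,thm0}, and $f=e^{\gamma}$, $g=e^{\gamma_1}$ with locally constant targets $\alpha,\beta$. The difference lies only in the specific combinatorics on the finitely many special pieces. In the paper's construction the preperiodic behaviour of $G_1$ under $f$ and under $g$ is realised through genuine $2$-cycles ($f$ has $B_1\leftrightarrow G_2$; $g$ has $B_2\leftrightarrow B_3$), and the wandering $f\circ g$-orbit of $G_1$ is routed along the $B_k$-highway ($G_1\to B_4\to B_6\to\cdots$). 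You instead make the terminal disks $B_1,B_2,B_3$ fixed by the relevant maps and send the wandering $f\circ g$-orbit along the $G_k$-highway ($G_1\to G_2\to G_4\to\cdots$). Your assignment is marginally simpler---fixed disks rather than $2$-cycles---and the verification of the four fates of $G_1$ is correspondingly shorter; the paper's choice, on the other hand, yields a richer collection of side remarks about the behaviour of $B_1,G_2,B_2,B_3$ under the four maps, which differ from the list you record. Either bookkeeping proves the theorem.
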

\begin{proof}
Let $S$ be the Carleman set defined as in Theorem \ref{sec2,thm1}.
Using the continuity of $e^z,$ for each $k=1,2,\ldots,$ choose $\eta_k,\xi_k$  so that
\[|e^w+(4k+6)|<\frac{1}{2},\; \mbox{whenever}\; |w-(\pi i+\log(4k+6))|<\eta_k,\] and
\[|e^w-(4k+6)|<\frac{1}{2},\; \mbox{whenever}\; |w-\log(4k+6)|<\xi_k.\]
 Also choose $\delta_0,\delta_1,\delta_2,\delta_3,\delta_4$ so that
\[|e^w-2|<\frac{1}{2},\; \mbox{whenever}\; |w-\log 2|<\delta_0,\]
\[|e^w+6|<\frac{1}{2},\; \mbox{whenever}\; |w-(\pi i+\log 6)|<\delta_1,\]
\[|e^w-10|<\frac{1}{2},\; \mbox{whenever}\; |w-\log 10|<\delta_2,\]
\[|e^w+10|<\frac{1}{2},\;\mbox{whenever}\; |w-(\pi i+\log 10)|<\delta_3,\] and
\[|e^w+14|<\frac{1}{2},\; \mbox{whenever}\; |w-(\pi i+\log 14)|<\delta_4.\]
We next define
\begin{equation}\notag
\alpha(z)=
\begin{cases}
\log 2,  & z\in G_0\cup\B{\{{\bigcup}_{k=1}^{\ity}( L_k\cup M_k)\B\}}\\
 \pi i+\log 6,   & z\in G_1\\
\log 10,   & z\in B_1\\ 
 \pi i+\log 6,   & z\in G_2\\
 \pi i+\log(4k+6),   & z\in B_k,\;k=2,3,\ldots\\
\log(4k+6),   & z\in G_k,\;k=3,4,\ldots
\end{cases}
\end{equation}
\begin{equation}\notag
\beta(z)=
\begin{cases}
\log 2,  & z\in G_0\cup\B{\{{\bigcup}_{k=1}^{\ity}( L_k\cup M_k)\B\}}\\
 \pi i+\log 14,   & z\in G_1\\
\log 10,   & z\in  B_1\\
 \pi i+\log 14,   & z\in B_2\\
\pi i+\log 10,   & z\in  B_3\\
\log(4k+6),   & z\in G_k,\;k=2,3,\ldots\\
 \pi i+\log(4k+6),   & z\in B_k,\;k=4,5,\ldots\\
\end{cases}
\end{equation}
\begin{equation}\notag
\epsilon(z)=
\begin{cases}
\delta_0,   & z\in G_0\cup\B{\{{\bigcup}_{k=1}^{\ity}( L_k\cup M_k)\B\}}\\
\delta_1,   & z\in G_1\\
\delta_2,   & z\in B_1\\
\delta_1,   & z\in G_2\\
\eta_k,  & z\in B_k,\;k=2,3,\ldots\\
\xi_k,   & z\in G_k,\;k=3,4,\ldots\\
\end{cases}
\end{equation}
and
\begin{equation}\notag
\epsilon_1(z)=
\begin{cases}
\delta_0,   & z\in G_0\cup\B{\{{\bigcup}_{k=1}^{\ity}( L_k\cup M_k)\B\}}\\
\delta_4,   & z\in G_1\\
\delta_2,   & z\in B_1\\
\delta_4,   & z\in B_2\\
\delta_3,   & z\in B_3\\
\xi_k,   & z\in G_k,\;k=2,3,\ldots\\
\eta_k,   & z\in B_k,\;k=4,5,\ldots\\
\end{cases}
\end{equation}
Clearly  $\alpha(z)$ is continuous on $S$ and analytic in $S^\circ.$ Thus there is an entire function $\gamma(z)$ such that $|\gamma(z)-\alpha(z)|<\epsilon(z)$ for all $z\in S.$ The function $f(z)=e^{\gamma(z)}$ is an entire function and it satisfies
\[|f(z)-2|<\frac{1}{2},\quad z\in G_0\cup\B{\{{\bigcup}_{k=1}^{\ity}( L_k\cup M_k)\B\}},\]
\[|f(z)+6|<\frac{1}{2},\quad z\in G_1,\]
\[|f(z)-10|<\frac{1}{2},\quad z\in B_1,\]
\[|f(z)+6|<\frac{1}{2},\quad z\in G_2,\]
\[|f(z)+(4k+6)|<\frac{1}{2},\quad z\in B_k,\;k=2,3,\ldots,\] and
\[|f(z)-(4k+6)|<\frac{1}{2},\quad z\in G_k,\;k=3,4,\ldots.\]
Also $\beta(z)$ is a continuous function on $S$ and analytic in $S^\circ,$  there exist an entire function $\gamma_1(z)$  such that if $g(z)=e^{\gamma_1(z)}$ then
\[|g(z)-2|<\frac{1}{2},\quad z\in G_0\cup\B{\{{\bigcup}_{k=1}^{\ity}( L_k\cup M_k)\B\}},\]
\[|g(z)+14|<\frac{1}{2},\quad z\in G_1,\]
\[|g(z)-10|<\frac{1}{2},\quad z\in B_1,\]
\[|g(z)+14|<\frac{1}{2},\quad z\in B_2,\]
\[|g(z)+10|<\frac{1}{2},\quad z\in B_3,\]
\[|g(z)-(4k+6)|<\frac{1}{2},\quad z\in G_k,\;k=2,3,\ldots,\] and
\[|g(z)+(4k+6)|<\frac{1}{2},\quad z\in B_k,\;k=4,5,\ldots.\]
It can now be easily seen that $G_1$ lies in the preperiodic component of $f$ and preperiodic component of $g$ and also lies in the preperiodic component of $g\circ f$ but lies in the wandering component of $f\circ g.$
\end{proof}
\begin{remark}\label{sec2,rem7}
\begin{enumerate}
\item[(i)] $B_1$ lies in the periodic component of $f$ and wandering component of $g$ and lies in the wandering component of $g\circ f$ and periodic component of $f\circ g.$
\item[(ii)] $G_2$ lies in the periodic component of $f$ and wandering component of $g$ and lies in the periodic component of $g\circ f$ and wandering component of $f\circ g.$
\item[(iii)] $G_k,\;k\geq 3$ lies in the wandering component of $f$ and wandering component of $g$ and also lies in the wandering component of $g\circ f$ and wandering component of $f\circ g.$
\item[(iv)] $B_2$ lies in the wandering component of $f$ and periodic component of $g$ and lies in the periodic component of $g\circ f$ and wandering component of $f\circ g.$
\item[(v)] $B_3$ lies in the wandering component of $f$ and periodic component of $g$ and lies in the wandering component of $g\circ f$ and periodic component of $f\circ g.$
\item[(vi)] $B_k,\;k\geq 4$ lies in the wandering component of $f$ and wandering component of $g$ and also lies in the wandering component of $g\circ f$ and wandering component of $f\circ g.$
\end{enumerate}
\end{remark}
Acknowledgement. We are thankful to Prof. A. P. Singh, Central University of Rajasthan for his helpful comments and suggestions.

\end{document}